\begin{document}

\newtheorem{theorem}[subsection]{Theorem}
\newtheorem{proposition}[subsection]{Proposition}
\newtheorem{lemma}[subsection]{Lemma}
\newtheorem{corollary}[subsection]{Corollary}
\newtheorem{conjecture}[subsection]{Conjecture}
\newtheorem{prop}[subsection]{Proposition}
\newtheorem{defin}[subsection]{Definition}

\numberwithin{equation}{section}
\newcommand{\mr}{\ensuremath{\mathbb R}}
\newcommand{\mc}{\ensuremath{\mathbb C}}
\newcommand{\dif}{\mathrm{d}}
\newcommand{\intz}{\mathbb{Z}}
\newcommand{\ratq}{\mathbb{Q}}
\newcommand{\natn}{\mathbb{N}}
\newcommand{\comc}{\mathbb{C}}
\newcommand{\rear}{\mathbb{R}}
\newcommand{\prip}{\mathbb{P}}
\newcommand{\uph}{\mathbb{H}}
\newcommand{\fief}{\mathbb{F}}
\newcommand{\majorarc}{\mathfrak{M}}
\newcommand{\minorarc}{\mathfrak{m}}
\newcommand{\sings}{\mathfrak{S}}
\newcommand{\fA}{\ensuremath{\mathfrak A}}
\newcommand{\mn}{\ensuremath{\mathbb N}}
\newcommand{\mq}{\ensuremath{\mathbb Q}}
\newcommand{\half}{\tfrac{1}{2}}
\newcommand{\f}{f\times \chi}
\newcommand{\summ}{\mathop{{\sum}^{\star}}}
\newcommand{\chiq}{\chi \bmod q}
\newcommand{\chidb}{\chi \bmod db}
\newcommand{\chid}{\chi \bmod d}
\newcommand{\sym}{\text{sym}^2}
\newcommand{\hhalf}{\tfrac{1}{2}}
\newcommand{\sumstar}{\sideset{}{^*}\sum}
\newcommand{\sumprime}{\sideset{}{'}\sum}
\newcommand{\sumprimeprime}{\sideset{}{''}\sum}
\newcommand{\sumflat}{\sideset{}{^\flat}\sum}
\newcommand{\shortmod}{\ensuremath{\negthickspace \negthickspace \negthickspace \pmod}}
\newcommand{\V}{V\left(\frac{nm}{q^2}\right)}
\newcommand{\sumi}{\mathop{{\sum}^{\dagger}}}
\newcommand{\mz}{\ensuremath{\mathbb Z}}
\newcommand{\leg}[2]{\left(\frac{#1}{#2}\right)}
\newcommand{\muK}{\mu_{\omega}}
\newcommand{\thalf}{\tfrac12}
\newcommand{\lp}{\left(}
\newcommand{\rp}{\right)}
\newcommand{\Lam}{\Lambda_{[i]}}
\newcommand{\lam}{\lambda}
\newcommand{\af}{\mathfrak{a}}
\newcommand{\sw}{S_{[i]}(X,Y;\Phi,\Psi)}
\newcommand{\lz}{\left(}
\newcommand{\pz}{\right)}
\newcommand{\bfrac}[2]{\lz\frac{#1}{#2}\pz}
\newcommand{\odd}{\mathrm{\ primary}}
\newcommand{\even}{\text{ even}}
\newcommand{\res}{\mathrm{Res}}
\newcommand{\sumn}{\sumstar_{(c,1+i)=1}  w\left( \frac {N(c)}X \right)}
\newcommand{\lab}{\left|}
\newcommand{\rab}{\right|}
\newcommand{\Go}{\Gamma_{o}}
\newcommand{\Ge}{\Gamma_{e}}
\newcommand{\M}{\widehat}

\theoremstyle{plain}
\newtheorem{conj}{Conjecture}
\newtheorem{remark}[subsection]{Remark}

\makeatletter
\def\widebreve{\mathpalette\wide@breve}
\def\wide@breve#1#2{\sbox\z@{$#1#2$}%
     \mathop{\vbox{\m@th\ialign{##\crcr
\kern0.08em\brevefill#1{0.8\wd\z@}\crcr\noalign{\nointerlineskip}%
                    $\hss#1#2\hss$\crcr}}}\limits}
\def\brevefill#1#2{$\m@th\sbox\tw@{$#1($}%
  \hss\resizebox{#2}{\wd\tw@}{\rotatebox[origin=c]{90}{\upshape(}}\hss$}
\makeatletter

\title[Ratios conjecture for primitive quadratic Hecke $L$-functions]{Ratios conjecture for primitive quadratic Hecke $L$-functions}

%%\date{\today}
\author[P. Gao]{Peng Gao}
\address{School of Mathematical Sciences, Beihang University, Beijing 100191, China}
\email{penggao@buaa.edu.cn}

\author[L. Zhao]{Liangyi Zhao}
\address{School of Mathematics and Statistics, University of New South Wales, Sydney NSW 2052, Australia}
\email{l.zhao@unsw.edu.au}

\begin{abstract}
 We develop the ratios conjecture with one shift in the numerator and denominator in certain ranges for families of primitive quadratic Hecke $L$-functions of imaginary quadratic number fields with class number one using multiple Dirichlet series under the generalized Riemann hypothesis. We also obtain unconditional asymptotic formulas for the first moments of central values of these families of $L$-functions with error terms of size that is the square root of that of the primary main terms.
\end{abstract}

\maketitle

\noindent {\bf Mathematics Subject Classification (2010)}: 11M06, 11M41  \newline

\noindent {\bf Keywords}:  ratios conjecture, mean values, primitive quadratic Hecke $L$-functions

\section{Introduction}\label{sec 1}

  The $L$-functions ratios conjecture is important in number theory, having many important applications.  These include it use in developing the density conjecture of N. Katz and P. Sarnak \cites{KS1, K&S} on the distribution of zeros near the central point of a family of $L$-functions, the mollified moments of $L$-functions, etc. This conjecture makes predictions on the asymptotic behaviors of the sum of ratios of products of shifted $L$-functions and has its origin in the work of D. W. Farmer \cite{Farmer93} on the shifted moments of the Riemann zeta function. For general $L$-functions, the conjecture is formulated by J. B. Conrey, D. W. Farmer and M. R. Zirnbauer \cite[Section 5]{CFZ}. \newline

 Since the work of H. M. Bui, A. Florea and J. P. Keating \cite{BFK21} on quadratic $L$-functions over function fields, there are now several results available in the literature concerning the ratios conjecture, all valid for certain ranges of the relevant parameters. The first such result over number fields was given by M. \v Cech \cite{Cech1} on families of both general and primitive quadratic Dirichlet $L$-functions under the assumption of the generalized Riemann hypothesis (GRH). \newline

  The work of \v Cech makes uses of the powerful tool of multiple Dirichlet series. Due to an extra functional
equation for the underlying multiple Dirichlet series, the result for the family of general quadratic Dirichlet $L$-functions has a better range of the parameters involved. Following this approach, the authors studied the ratios conjecture for general quadratic Hecke $L$-functions over the Gaussian field $\mq(i)$ in \cite{G&Zhao14} as well as for quadratic twists of modular $L$-functions in \cite{G&Zhao15}. \newline

  It is noted in \cite{G&Zhao14} that investigations on ratios conjecture naturally lead to results concerning the first moment of central values of the corresponding family of $L$-functions, another important subject in number theory. Among the many families of $L$-functions, the primitive ones received much attention. For families of primitive Dirichlet $L$-functions, M. Jutila evaluated asymptotically the first moment in \cite{Jutila}. In \cite{DoHo}, D. Goldfeld and J. Hoffstein initiated the study on the first moment of the same family using method of double Dirichlet series. It is implicit in their work that an asymptotic formula with an error term of size of the square root of the main term holds for the smoothed first moment and this was later established by M. P. Young \cite{Young1} using a recursive argument. In \cite{Gao20}, the first-named author obtained a similar result for the first moment of the family of primitive quadratic Hecke $L$-functions over the Gaussian field using Young's recursive method. \newline

   Motivated by studies of the first moment of families of primitive $L$-functions, we are interested in this paper to develop the ratios conjecture for primitive quadratic Hecke $L$-functions over imaginary quadratic number fields with class number one.  We shall show that, as a consequence of the ratios conjecture, asymptotic formulas hold for the first moments of central values of the corresponding families of $L$-functions with error terms of size of the square root of that of the main terms. \newline

 To state our result, let $K$ be an imaginary quadratic number field of class number one. We shall assume throughout the paper that $K=\mq(\sqrt{d})$ with $d \in \mathcal{S}$, where
\begin{align*}
%%\label{dvalue}
\mathcal{S} = \{-1, -2, -3, -7,-11,-19,-43,-67,-163 \}.
\end{align*}
  In fact, it is well-known (see \cite[(22.77)]{iwakow}) that we obtain all imaginary quadratic number fields with class number one in the above way. \newline

   We write $\chi^{(m)}=\leg {m}{\cdot}$ and $\chi_{m} =\leg {\cdot}{m}$ for the quadratic residue symbols to be defined Section \ref{sec2.4}. We define $c_K=(1+i)^5$ when $d=-1$, $c_K=4\sqrt{-2}$ when $d=-2$ and $c_K=8$ for the other $d$'s in $\mathcal{S}$.  Let $\mathcal{O}_K, U_K$ and $D_K$ denote the ring of integers, the group of units and the discriminant of $K$, respectively. We also write $|U_K|$ for the cardinality of $U_K$.  An element $c \in \mathcal O_K$ is said to be square-free if the ideal $(c)$ is not divisible by the square of any prime ideal.  It is shown in \cite[Section 2.1]{G&Zhao4} and \cite[Section 2.2]{G&Zhao2022-4} that $\chi^{(c_Kc)}$ is a primitive quadratic character of trivial infinite type for any square-free $c$. \newline

Let $L(s, \chi)$ stand for the $L$-function attached to any Hecke character $\chi$ and $\zeta_K(s)$ for the Dedekind zeta function of $K$.  We also use the notation $L^{(c)}(s, \chi_m)$ for the Euler product defining $L(s, \chi_m)$ but omitting those primes dividing $c$. Moreover, let $r_K$ denote the residue of $\zeta_K(s)$ at $s = 1$.  We reserve the letter $\varpi$ for a prime element in $\mathcal O_K$, by which we mean that the ideal $(\varpi)$ generated by
$\varpi$ is a prime ideal. Let also $N(n)$ stand for the norm of any $n \in \mathcal{O}_K$.  In the sequel, $\varepsilon$ always, as is standard, denotes a small positive real number which may not be the same at each occurrence. \newline

Our main result in this paper investigates the ratios conjecture with one shift in the numerator and denominator for the family of primitive quadratic Hecke $L$-functions of $K$.
\begin{theorem} \label{Theorem for all characters}
With the notation as above and assuming the truth of GRH, let $K=\mq(\sqrt{-d})$ with $d \in \mathcal  S$.  Suppose that $w(t)$ is a non-negative Schwartz function and $\widehat w(s)$ its Mellin transform.  We set
\begin{equation} \label{Nab}
			E(\alpha,\beta)=\max\left\{\frac 12, 1-\Re(\alpha)-\Re(\beta), 1-\frac {\Re(\alpha)}{2}-\frac {\Re(\beta)}{2}, \frac 12-\frac {\Re(\alpha)}{2}, \frac 12-\Re(\alpha) \right\}.
\end{equation}
and, for any $n \in \mathcal O_K$, 
\begin{align} \label{adef}
	a(n)=\prod_{\varpi|n}\Big(1+ \frac 1{N(\varpi)}\Big)^{-1}.
\end{align}
  Then we have for $0<|\Re(\alpha)|<1/2$ and $\Re(\beta)>0$,
\begin{align}
\label{Asymptotic for ratios of all characters}
\begin{split}	
\sumstar_{(c,2)=1} & \frac{L(\frac{1}{2}+\alpha,\chi^{(c_Kc)})}{L(\frac{1}{2}+\beta,\chi^{(c_Kc)})} w\left( \frac {N(c)}X \right) \\
 &=  X\M w(1)\frac {r_K |U_K|^2a(2)}{\zeta_K(2)} \frac{\zeta_K^{(2)}(1+2\alpha)}{\zeta_K^{(2)}(1+\alpha+\beta)}
P(\tfrac 12+\alpha, \tfrac 12+\beta)\\
& \hspace*{1cm}  +X^{1-\alpha}\M w(1-\alpha) \frac{(2\pi)^{2\alpha}}{(|D_K|N(c_K))^{\alpha}} \frac {\Gamma(\tfrac{1}{2}-\alpha)}{\Gamma (\tfrac{1}{2}+\alpha)}\frac {r_K |U_K|^2a(2)}{\zeta_K(2)}  \frac{\zeta_K^{(2)}(1-2\alpha)}{\zeta_K^{(2)}(1-\alpha+\beta)}P(\tfrac 12-\alpha,\tfrac 12+\beta) \\
& \hspace*{1cm}  +O\lz(1+|\alpha|)^{5/2+\varepsilon}(1+|\beta|)^{\varepsilon} X^{E(\alpha,\beta)+\varepsilon}\pz,
\end{split}
\end{align}
 where $\sum^*$ means that the sum is restricted to square-free elements $c$ of $\mathcal{O}_K $ and
\begin{equation}
\label{Pwz}
		P(w,z)=\prod_{(\varpi,2)=1}\lz1+\frac{1-N(\varpi)^{z-w}}{(N(\varpi)+1)(N(\varpi)^{z+w}-1)}\pz.
\end{equation} 	
\end{theorem}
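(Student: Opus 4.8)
\emph{Proof strategy.} The plan is to use multiple Dirichlet series. By Mellin inversion, for $\sigma$ sufficiently large,
\[
\sumstar_{(c,2)=1}\frac{L(\tfrac12+\alpha,\chi^{(c_Kc)})}{L(\tfrac12+\beta,\chi^{(c_Kc)})}\,w\lz\frac{N(c)}{X}\pz=\frac{1}{2\pi i}\int_{(\sigma)}X^{s}\,\M w(s)\,A(s;\alpha,\beta)\,\dif s,\qquad A(s;\alpha,\beta):=\sumstar_{(c,2)=1}\frac{L(\tfrac12+\alpha,\chi^{(c_Kc)})}{L(\tfrac12+\beta,\chi^{(c_Kc)})}\,\frac{1}{N(c)^{s}}.
\]
The theorem will follow once we establish the meromorphic continuation of $A(s;\alpha,\beta)$ to the half-plane $\Re(s)\ge E(\alpha,\beta)+\varepsilon$, show that its only poles there are simple poles at $s=1$ and $s=1-\alpha$, and obtain polynomial growth in vertical strips; one then moves the contour to $\Re(s)=E(\alpha,\beta)+\varepsilon$ and collects the two residues.

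To continue $A$, expand $L(\tfrac12+\alpha,\chi^{(c_Kc)})$ and $1/L(\tfrac12+\beta,\chi^{(c_Kc)})=\sum_{m}\mu_K(m)\chi^{(c_Kc)}(m)N(m)^{-1/2-\beta}$ into Dirichlet series over $\mathcal O_K$, which under GRH converge in suitable regions; for $\Re(\alpha)<0$ one first applies the Hecke functional equation to $L(\tfrac12+\alpha,\chi^{(c_Kc)})$ (this is what supplies the factors $(2\pi)^{2\alpha}(|D_K|N(c_K))^{-\alpha}$ and $\Gamma(\tfrac12-\alpha)/\Gamma(\tfrac12+\alpha)$), so by analytic continuation in $\alpha$ it suffices to treat $\Re(\alpha)>0$. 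Interchanging summations, the inner sum over squarefree $c$ coprime to $2$ of $\chi^{(c_Kc)}(nm)N(c)^{-s}$ is evaluated by quadratic reciprocity in $\mathcal O_K$ together with the standard evaluation of a quadratic symbol summed over squarefree elements, expressing it as $L^{(2)}(s,\chi_{nm})/\zeta^{(2)}_K(2s)$ times explicit local factors at the primes dividing $2nm$; the character $\chi_{nm}$ is principal exactly when $nm$ is a perfect square, so a pole at $s=1$ (coming from $\zeta^{(2)}_K(s)$) arises only on the square locus.

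The first main term comes from the terms with $nm=\square$: since $m$ is squarefree, it is forced to be the product of the primes dividing $n$ to an odd power, so the $(n,m)$-sum collapses to a single sum over $n$ that factors into an Euler product; taking the residue at $s=1$ (which produces $r_K$) and simplifying the Euler product yields $\zeta^{(2)}_K(1+2\alpha)/\zeta^{(2)}_K(1+\alpha+\beta)$, the factor $P(\tfrac12+\alpha,\tfrac12+\beta)$ and the constant $r_K|U_K|^{2}a(2)/\zeta_K(2)$, and feeding this into the Mellin integral gives the $X\M w(1)$ term. For the remaining terms $(nm\neq\square)$, apply Hecke's functional equation to $L^{(2)}(s,\chi_{nm})$; this reflects $s\mapsto 1-s$, introduces the conductor factor $(|D_K|N(c_K)N(nm))^{1/2-s}$ and a ratio of Gamma factors, and turns $A$ into a dual multiple Dirichlet series which, combined with the weights $N(n)^{-1/2-\alpha}$ and $N(m)^{-1/2-\beta}$, has a (simple) pole at $s=1-\alpha$; its residue together with $\M w(1-\alpha)$ produces the second main term, carrying the Gamma quotient $\Gamma(\tfrac12-\alpha)/\Gamma(\tfrac12+\alpha)$, the factor $(2\pi)^{2\alpha}(|D_K|N(c_K))^{-\alpha}$, and $\zeta^{(2)}_K(1-2\alpha)/\zeta^{(2)}_K(1-\alpha+\beta)\,P(\tfrac12-\alpha,\tfrac12+\beta)$.

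Finally, moving the contour to $\Re(s)=E(\alpha,\beta)+\varepsilon$, the surviving integral is estimated using GRH convexity bounds for $L(s,\chi_k)$ and $1/L(s,\chi_k)$ on $\Re(s)\ge\tfrac12$ to control the tails of the $n$- and $m$-sums, together with Stirling bounds for the Gamma factors arising from the two functional equations, giving the error $O\big((1+|\alpha|)^{5/2+\varepsilon}(1+|\beta|)^{\varepsilon}X^{E(\alpha,\beta)+\varepsilon}\big)$. I expect the main obstacle to be the bookkeeping in the analytic continuation: making the interchange of the $c$-sum with the (only conditionally convergent) $n,m$-sums rigorous under GRH, carrying the local factors at the prime above $2$ (the prime $1+i$ when $d=-1$) and at the primes dividing $nm$ correctly through the reciprocity and functional-equation steps, and verifying that between $\Re(s)=E(\alpha,\beta)$ and the region of absolute convergence the only poles of $A$ are the two identified above — the several quantities inside the $\max$ defining $E(\alpha,\beta)$ are precisely the abscissae governing the sizes of the $c$-sum (the term $\tfrac12$), the $n$- and $m$-sums, and their functional-equation duals.
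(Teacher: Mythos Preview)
Your high-level plan (Mellin inversion, study the triple Dirichlet series $A_K(s,w,z)$, shift the contour past two simple poles) matches the paper, but two of your key technical steps diverge from the paper's argument, and one of them is not clearly correct as you have described it.

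\textbf{The second pole.} You propose to obtain the pole at $s=1-\alpha$ by splitting off the $nm\neq\square$ terms and applying the functional equation to $L^{(2)}(s,\chi_{nm})$ in the $s$-variable. The paper does \emph{not} do this. Instead it applies the functional equation to $L(w,\chi^{(c_Kc)})$ in the $w$-variable, summed over $c$, which gives a clean functional equation for the multiple Dirichlet series itself:
\[
A_K(s,w,z)=(|D_K|N(c_K))^{1/2-w}(2\pi)^{2w-1}\frac{\Gamma(1-w)}{\Gamma(w)}\,A_K\big(s+w-\tfrac12,\,1-w,\,z\big).
\]
The pole at $s=1-\alpha$ then drops out immediately as the image of the pole at $s'=1$ of $A_K(s',1-w,z)$, and its residue is again computed from the $mk=\square$ diagonal, \emph{not} from the off-diagonal terms. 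Your route via the $s$-functional equation of $L(s,\chi_{nm})$ would, after the conductor twist $N(nm)^{1/2-s}$, leave you with a sum whose pole structure in $s$ is tangled with the $n,m$ summation and with the squarefree kernel of $nm$; extracting a clean simple pole at $s=1-\alpha$ from this is not the ``bookkeeping'' you suggest and would essentially require rediscovering the above functional equation for $A_K$ anyway.

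\textbf{Analytic continuation and bounds.} You treat the continuation of $A_K$ past $\Re(s)=1$ as routine. In the paper it is obtained by establishing absolute convergence of $(s-1)(s+w-3/2)\zeta_K(2s)\zeta_K(2s+2w-1)A_K(s,w,z)$ in several overlapping tubes (one from the expansion in $m,k$, one from the direct bound on the $c$-sum, one from the functional equation above) and then invoking Bochner's tube theorem to pass to their convex hull $S_3$; Proposition~\ref{Extending inequalities} is used to propagate the vertical-strip bounds. This is a genuine input, not bookkeeping, and it is what determines the shape of $E(\alpha,\beta)$. Two further points: the paper bounds the $m,k$-sum via an unconditional second-moment estimate for quadratic Hecke $L$-functions coming from the large sieve (Lemma~\ref{lem:2.3}), rather than pointwise GRH/Lindel\"of bounds (GRH is used only for $1/L(z,\chi^{(c_Kc)})$); and the paper explicitly estimates the product $\zeta_K(2s)A_K(s,w,z)$ rather than $A_K$ alone, which is precisely what allows the continuation past $\Re(s)=\tfrac12$ and yields the improved error term.
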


  An evaluation from the ratios conjecture on the left-hand side of \eqref{Asymptotic for ratios of all characters} can be derived following the treatments given in \cite[Section 5]{G&Zhao2022-2}. One sees that our result above is consistent with the result therefrom, except that this approach predicts that \eqref{Asymptotic for ratios of all characters} holds uniformly for $|\Re(\alpha)|< 1/4$, $(\log X)^{-1} \ll \Re(\beta) < 1/4$ and $\Im(\alpha)$, $\Im(\beta) \ll X^{1-\varepsilon}$ with an error term $O(X^{1/2+\varepsilon})$. The advantage of \eqref{Asymptotic for ratios of all characters} here chiefly lies in the absence of any constraint on imaginary parts of $\alpha$ and $\beta$.  Moreover, we do obtain an error term of size $O(X^{1/2+\varepsilon})$ unconditionally when $\beta$ is large enough. In fact, one checks by going through our proof of Theorem \ref{Theorem for all characters} in Section \ref{sec: proof of main result} that the only place we need to assume GRH is \eqref{Lindelof}, to estimate the size of $1/L(z,\chi^{(c_Kc)})$. However, when $\Re(z)$ is larger than one, then  $1/L(z,\chi^{(c_Kc)})$ is $O(1)$ unconditionally. Replacing this estimation everywhere in the rest of the proof of Theorem \ref{Theorem for all characters}, and then consider the case $\beta \rightarrow \infty$, we immediately deduce from  \eqref{Asymptotic for ratios of all characters} the following unconditional result on the smoothed first moment of values of quadratic Hecke $L$-functions.
\begin{theorem}
\label{Thmfirstmoment}
Using the notation as above and assuming the truth of GRH, let $K=\mq(\sqrt{-d})$ with $d \in \mathcal  S$. We have for $0<|\Re(\alpha)|<1/2$,
\begin{align}
\label{Asymptotic for first moment}
\begin{split}
\sumstar_{(c,2)=1} & L(\tfrac{1}{2}+\alpha,\chi^{(c_Kc)})w\left( \frac {N(c)}X \right) \\
=&  X\M w(1)\frac {r_K |U_K|^2a(2)}{\zeta_K(2)} \zeta_K^{(2)}(1+2\alpha)
P(\tfrac 12+\alpha)\\
& \hspace*{1cm}  +X^{1-\alpha}\M w(1-\alpha)(|D_K|N(c_K))^{-\alpha}(2\pi)^{2\alpha}\frac {\Gamma(\tfrac{1}{2}-\alpha)}{\Gamma (\tfrac{1}{2}+\alpha)}\frac {r_K |U_K|^2a(2)}{\zeta_K(2)}  \zeta_K^{(2)}(1-2\alpha)P(\tfrac 12-\alpha) \\
& \hspace*{1cm}  +O\lz(1+|\alpha|)^{5/2+\varepsilon} X^{E(\alpha)+\varepsilon}\pz,
\end{split}
\end{align}
  where
\begin{align*}
%%\label{Pw}
 E(\alpha)=\lim_{\beta \rightarrow \infty}E(\alpha,\beta)=\max\left\{\frac 12, \frac 12-\frac {\Re(\alpha)}{2}, \frac 12-\Re(\alpha) \right\}, \;
	P(w) =\lim_{z \rightarrow \infty}P(w,z)=\prod_{(\varpi,2)=1}\lz1-\frac{1}{(N(\varpi)+1)N(\varpi)^{2w}}\pz.
\end{align*} 	
\end{theorem}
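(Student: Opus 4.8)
The plan is to obtain Theorem~\ref{Thmfirstmoment} directly from Theorem~\ref{Theorem for all characters} by letting $\beta\to\infty$ in \eqref{Asymptotic for ratios of all characters}. On the left-hand side, writing $1/L(\tfrac12+\beta,\chi^{(c_Kc)})=\prod_{\varpi}\bigl(1-\chi^{(c_Kc)}(\varpi)N(\varpi)^{-1/2-\beta}\bigr)$ for $\Re(\beta)>1/2$ shows that this factor tends to $1$ uniformly in $c$ as $\Re(\beta)\to\infty$; since $w$ is Schwartz and $L(\tfrac12+\alpha,\chi^{(c_Kc)})\ll N(c)^{\varepsilon}$ under GRH (and $\ll N(c)^{1/4+\varepsilon}$ unconditionally), there is a summable majorant independent of $\beta$ for $\Re(\beta)\ge 1$, so the left-hand side of \eqref{Asymptotic for ratios of all characters} converges to $\sumstar_{(c,2)=1}L(\tfrac12+\alpha,\chi^{(c_Kc)})w(N(c)/X)$, the left-hand side of \eqref{Asymptotic for first moment}.

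Next I would check that each term on the right-hand side of \eqref{Asymptotic for ratios of all characters} has the expected limit. The factors $\zeta_K^{(2)}(1+\alpha+\beta)$ and $\zeta_K^{(2)}(1-\alpha+\beta)$ tend to $1$; and from \eqref{Pwz} one sees factor by factor that, as $\Re(z)\to\infty$, $1+\frac{1-N(\varpi)^{z-w}}{(N(\varpi)+1)(N(\varpi)^{z+w}-1)}\to 1-\frac{1}{(N(\varpi)+1)N(\varpi)^{2w}}$, the convergence being fast enough in $N(\varpi)$ to be passed inside the product, so that $P(\tfrac12\pm\alpha,\tfrac12+\beta)\to P(\tfrac12\pm\alpha)$. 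Hence the first two lines of \eqref{Asymptotic for ratios of all characters} tend to the first two lines of \eqref{Asymptotic for first moment}. Finally, in \eqref{Nab} the entries $1-\Re(\alpha)-\Re(\beta)$ and $1-\tfrac12\Re(\alpha)-\tfrac12\Re(\beta)$ become negative once $\Re(\beta)$ is large and leave the maximum, giving $E(\alpha)=\max\{\tfrac12,\tfrac12-\tfrac12\Re(\alpha),\tfrac12-\Re(\alpha)\}$.

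The one point requiring care, and the reason the conclusion is in fact unconditional, concerns the error term, which as written still carries a factor $(1+|\beta|)^{\varepsilon}$. Inspecting the proof of Theorem~\ref{Theorem for all characters}, the denominator $L$-function — and with it the sole use of GRH — enters only through $1/L(z,\chi^{(c_Kc)})$ on a contour with $\Re(z)$ close to $\tfrac12+\Re(\beta)$, bounded via \eqref{Lindelof}. Once $\Re(\beta)$ is large enough that this contour lies in $\Re(z)>1$, one instead has $1/L(z,\chi^{(c_Kc)})=O(1)$ with an absolute implied constant, independent of $\beta$ and free of GRH. Re-running the argument with this substitution makes \eqref{Asymptotic for ratios of all characters} unconditional and removes the $(1+|\beta|)^{\varepsilon}$ from the error for all sufficiently large $\Re(\beta)$, for which also $E(\alpha,\beta)=E(\alpha)$; letting $\beta\to\infty$ in this version yields \eqref{Asymptotic for first moment} with error $O((1+|\alpha|)^{5/2+\varepsilon}X^{E(\alpha)+\varepsilon})$.

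Since the analytic substance is entirely contained in Theorem~\ref{Theorem for all characters}, the main obstacle here is the bookkeeping of this last step: one must confirm that replacing \eqref{Lindelof} by the trivial bound on $\Re(z)>1$ does not move or change the residues of the poles producing the two main terms — it does not, as these poles come from the Dedekind zeta factors and the functional equation of $L(\tfrac12+\alpha,\chi^{(c_Kc)})$, not from the denominator — and that the limits above commute with the finitely many contour integrals appearing in the proof, which follows from the uniform estimates just described. I expect this verification of uniformity in $\beta$ to be the only genuinely delicate part.
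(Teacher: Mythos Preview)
Your approach is essentially the paper's own: the authors derive Theorem~\ref{Thmfirstmoment} precisely by observing that GRH enters the proof of Theorem~\ref{Theorem for all characters} only through the bound \eqref{Lindelof} on $1/L(z,\chi^{(c_Kc)})$, replacing it by the unconditional $O(1)$ valid for $\Re(z)>1$, and then letting $\beta\to\infty$ in \eqref{Asymptotic for ratios of all characters}. Your identification of the $(1+|\beta|)^{\varepsilon}$ factor as the only obstruction and your resolution of it match the paper exactly, and your verification of the limits of the main terms and of $E(\alpha,\beta)\to E(\alpha)$, $P(w,z)\to P(w)$ is the expected straightforward check.
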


 Note that the $O$-term in \eqref{Asymptotic for first moment} is uniform in $\alpha$, we can further take the limit $\alpha \rightarrow 0^+$ and deduce the following asymptotic formula for the smoothed first moment of central values of quadratic Hecke $L$-functions.
\begin{corollary}
\label{Thmfirstmomentatcentral}
		With the notation as above, let $K=\mq(\sqrt{-d})$ for $d \in \mathcal  S$.  We have,
\begin{align*}
%%\label{Asymptotic for first moment at central}
\begin{split}			
	& 	\sumstar_{(c,2)=1}  L(\tfrac{1}{2}+\alpha,\chi^{(c_Kc)})w\left( \frac {N(c)}X \right)  = XQ_K(\log X)+O\lz  X^{1/2+\varepsilon}\pz.
\end{split}
\end{align*}
  where $Q_K$ is a linear polynomial whose coefficients depend only $K$, $\M w(1)$ and $\M w'(1)$.
\end{corollary}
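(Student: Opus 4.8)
The plan is to obtain the corollary by letting $\alpha\to 0^+$ through positive real values in the asymptotic formula \eqref{Asymptotic for first moment} of Theorem~\ref{Thmfirstmoment}, taking advantage of the fact, noted just above, that the $O$-term there is uniform in $\alpha$. First I would check that for each fixed $X$ the left-hand side of \eqref{Asymptotic for first moment} is continuous in $\alpha$ at $\alpha=0$: each $L(\tfrac12+\alpha,\chi^{(c_Kc)})$ is entire because $\chi^{(c_Kc)}$ is a primitive, hence non-principal, character, and since $L(\tfrac12+\alpha,\chi^{(c_Kc)})$ is bounded by a fixed power of $N(c)$ uniformly for $\alpha$ in a neighbourhood of $0$, the rapid decay of $w$ makes the series $\sumstar_{(c,2)=1}L(\tfrac12+\alpha,\chi^{(c_Kc)})w(N(c)/X)$ converge uniformly there; thus it tends to $\sumstar_{(c,2)=1}L(\tfrac12,\chi^{(c_Kc)})w(N(c)/X)$. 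On the right-hand side, $E(\alpha)=\max\{\tfrac12,\tfrac12-\tfrac{\Re(\alpha)}{2},\tfrac12-\Re(\alpha)\}$ equals $\tfrac12$ as soon as $\Re(\alpha)>0$ and $(1+|\alpha|)^{5/2+\varepsilon}$ stays bounded, so the error term remains $O(X^{1/2+\varepsilon})$ throughout the limit.

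It then remains to compute the limit of the two explicit main terms. Each of them carries a factor $\zeta_K^{(2)}(1\pm2\alpha)$ with a simple pole at $\alpha=0$; writing $\zeta_K^{(2)}(s)=A/(s-1)+B+O(s-1)$ near $s=1$, the first main term has principal part $X\mathcal R/\alpha$ while the second, since $X^{1-\alpha}\to X$, $\M w(1-\alpha)\to\M w(1)$, $(|D_K|N(c_K))^{-\alpha}\to1$, $(2\pi)^{2\alpha}\to1$, $\Gamma(\tfrac12-\alpha)/\Gamma(\tfrac12+\alpha)\to1$ and $P(\tfrac12-\alpha)\to P(\tfrac12)$, has principal part $-X\mathcal R/\alpha$ with the same $\mathcal R=\tfrac12\M w(1)\,\tfrac{r_K|U_K|^2a(2)}{\zeta_K(2)}\,A\,P(\tfrac12)$, so the poles cancel. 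I would then Taylor-expand the remaining analytic factors to first order in $\alpha$ — in particular $X^{-\alpha}=1-\alpha\log X+O(\alpha^2(\log X)^2)$ and $\M w(1-\alpha)=\M w(1)-\alpha\M w'(1)+O(\alpha^2)$ — and collect the terms of order $\alpha^0$. Multiplying $-\alpha\log X$ from $X^{-\alpha}$ against the simple pole $-X\mathcal R/\alpha$ of the second term yields a surviving contribution $X\mathcal R\log X$; the residual $\alpha(\log X)^2$ and $\alpha\log X$ pieces vanish in the limit; and the remaining $\alpha^0$ contributions — built from $B$, $P'(\tfrac12)$, $\log(|D_K|N(c_K))$, $\log(2\pi)$, $\Gamma'/\Gamma(\tfrac12)$ and $\M w'(1)$ — assemble into a constant multiple of $X$. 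Altogether this produces $X\,Q_K(\log X)+O(X^{1/2+\varepsilon})$, where $Q_K$ is a polynomial of degree at most one whose leading coefficient $\mathcal R$ depends only on $K$ and $\M w(1)$ and whose constant term depends only on $K$, $\M w(1)$ and $\M w'(1)$.

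The computation is routine; the only point needing some care is the bookkeeping of Laurent and Taylor coefficients required to confirm that the two simple poles cancel exactly and that no $(\log X)^2$ term survives, so that $Q_K$ is genuinely affine in $\log X$. The remaining ingredients — continuity of the left-hand side in $\alpha$, persistence of the error bound, and interchange of limit and summation over $c$ — all follow from the analyticity of the $L$-functions together with the Schwartz decay of $w$.
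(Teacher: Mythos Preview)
Your proposal is correct and follows exactly the approach indicated in the paper, which simply notes that the $O$-term in Theorem~\ref{Thmfirstmoment} is uniform in $\alpha$ and that one may therefore let $\alpha\to 0^+$; you have merely supplied the Laurent--Taylor bookkeeping that the paper leaves implicit, correctly checking that the simple poles of $\zeta_K^{(2)}(1\pm 2\alpha)$ cancel and that a single $\log X$ survives.
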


  As another application of Theorem \ref{Theorem for all characters}, we differentiate with respect to $\alpha$ in \eqref{Asymptotic for ratios of all characters} and then set $\alpha=\beta=r$ to obtain an asymptotic formula concerning the smoothed first moment of $L'(\frac{1}{2}+r,\chi^{(c_Kc)})/L(\frac{1}{2}+r,\chi^{(c_Kc)})$ averaged over odd, square-free $c$.
\begin{theorem}
\label{Theorem for log derivatives}
	With the notation as above and assuming the truth of GRH. Let $K=\mq(\sqrt{-d})$ with $d \in \mathcal  S$. We have for $0<\varepsilon<\Re(r)<1/2$,
\begin{align}
\label{Sum of L'/L with removed 2-factors}
\begin{split}
\sumstar_{(c,2)=1} & \frac{L'(\frac{1}{2}+r,\chi^{(c_Kc)})}{L(\frac{1}{2}+r,\chi^{(c_Kc)})} w\left( \frac {N(c)}X \right) \\
			&= X\M w(1)\frac {r_K |U_K|^2a(2)}{\zeta_K(2)} \lz\frac{(\zeta^{(2)}_K(1+2r))'}{\zeta^{(2)}_K(1+2r)}+\sum_{(\varpi, 2)=1}\frac{\log N(\varpi)}{N(\varpi)(N(\varpi)^{1+2r}-1)}\pz \\
			& \hspace*{1cm} - X^{1-r}\M w(1-r)(|D_K|N(c_K))^{-r}(2\pi)^{2r}\frac {\Gamma(1/2-r)}{\Gamma (1/2+r)}\frac {|U_K|^2a(2)}{\zeta^{(2)}_K(2-2r)}+O((1+|r|)^{5/2+\varepsilon}X^{1-2r+\varepsilon}).
\end{split}
\end{align}
\end{theorem}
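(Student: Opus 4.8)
The plan is to obtain \eqref{Sum of L'/L with removed 2-factors} directly from the ratios formula \eqref{Asymptotic for ratios of all characters}, by differentiating in $\alpha$ and then putting $\alpha=\beta=r$, since for every square-free $c$ one has $\frac{\partial}{\partial\alpha}\big(L(\tfrac12+\alpha,\chi^{(c_Kc)})/L(\tfrac12+\beta,\chi^{(c_Kc)})\big)\big|_{\alpha=\beta=r}=L'(\tfrac12+r,\chi^{(c_Kc)})/L(\tfrac12+r,\chi^{(c_Kc)})$. To make the step rigorous I would fix $\beta=r$ with $\varepsilon<\Re(r)<1/2$, regard both sides of \eqref{Asymptotic for ratios of all characters} as functions holomorphic in $\alpha$ on a small disc $|\alpha-r|\le\rho$ contained in the admissible region $0<\Re(\alpha)<1/2$ (here GRH is what guarantees $L(\tfrac12+\beta,\chi^{(c_Kc)})\ne 0$, so that the ratio is holomorphic), and apply Cauchy's integral formula $f'(r)=\frac1{2\pi i}\oint_{|\alpha-r|=\rho}f(\alpha)(\alpha-r)^{-2}\,d\alpha$ to each of the three terms on the right-hand side of \eqref{Asymptotic for ratios of all characters}; interchanging $\partial/\partial\alpha$ with the sum over $c$ is justified by the rapid decay of $w$.

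Differentiating the two explicit main terms and specializing is then a matter of bookkeeping. For the first, observe that every Euler factor of $P$ in \eqref{Pwz} equals $1$ when its two arguments agree and $\zeta_K^{(2)}(1+2\alpha)/\zeta_K^{(2)}(1+\alpha+\beta)=1$ at $\alpha=\beta$, so $\zeta_K^{(2)}(1+2\alpha)\zeta_K^{(2)}(1+\alpha+\beta)^{-1}P(\tfrac12+\alpha,\tfrac12+\beta)$ equals $1$ at $\alpha=\beta=r$ and its $\alpha$-derivative there is the $\alpha$-derivative of its logarithm --- a combination of the logarithmic derivative of $\zeta_K^{(2)}$ at $1+2r$ with a convergent prime sum coming from $\partial_\alpha\log P$ --- which, after an elementary regrouping of Euler products, is exactly the bracket $\frac{(\zeta^{(2)}_K(1+2r))'}{\zeta^{(2)}_K(1+2r)}+\sum_{(\varpi,2)=1}\frac{\log N(\varpi)}{N(\varpi)(N(\varpi)^{1+2r}-1)}$ in \eqref{Sum of L'/L with removed 2-factors}. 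For the second main term the essential point is that it carries the factor $\zeta_K^{(2)}(1-\alpha+\beta)^{-1}$, which vanishes at $\alpha=\beta$ because $\zeta_K^{(2)}$ has a simple pole at $s=1$; hence in the Leibniz expansion of its $\alpha$-derivative, evaluated at $\alpha=\beta=r$, every term but the one differentiating this factor contains a vanishing factor and drops out, while $\frac{\partial}{\partial\alpha}\zeta_K^{(2)}(1-\alpha+\beta)^{-1}\big|_{\alpha=\beta=r}=-\big(\res_{s=1}\zeta_K^{(2)}(s)\big)^{-1}$. Evaluating the surviving factors at $\alpha=\beta=r$ --- so that $X^{1-r}\M w(1-r)$ appears and the prospective $X^{1-r}\M w'(1-r)$ and $X^{1-r}\log X$ terms cancel by the same vanishing --- and simplifying with the functional equation of $\zeta_K$ then yields precisely the term $-X^{1-r}\M w(1-r)(|D_K|N(c_K))^{-r}(2\pi)^{2r}\frac{\Gamma(1/2-r)}{\Gamma(1/2+r)}\frac{|U_K|^2a(2)}{\zeta^{(2)}_K(2-2r)}$.

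The only genuinely delicate point is the error term. Applying Cauchy's formula crudely to the error $O\big((1+|\alpha|)^{5/2+\varepsilon}(1+|\beta|)^{\varepsilon}X^{E(\alpha,\beta)+\varepsilon}\big)$ of \eqref{Asymptotic for ratios of all characters} gives only $O\big((1+|r|)^{5/2+\varepsilon}X^{E(r,r)+\varepsilon}\big)$, and since $E(r,r)=1-\Re(r)$ on $0<\Re(r)<1/2$ this is not smaller than the second main term and so is too weak to leave a meaningful two-term asymptotic. To reach the asserted $O\big((1+|r|)^{5/2+\varepsilon}X^{1-2r+\varepsilon}\big)$ one has to return to the proof of Theorem \ref{Theorem for all characters} in Section \ref{sec: proof of main result} and examine the $(\alpha,\beta)$-dependence of the error near the diagonal $\alpha=\beta$: the constituent responsible for the exponent $1-\tfrac12\Re(\alpha)-\tfrac12\Re(\beta)$ in \eqref{Nab} must be shown to improve on the diagonal to the smaller order governed by $1-\Re(\alpha)-\Re(\beta)$, a gain one expects to come from the vanishing at $\alpha=\beta$ of the residue of the relevant pole of the underlying multiple Dirichlet series (equivalently, from $P(\tfrac12+\alpha,\tfrac12+\beta)-1$ vanishing there), which permits pushing that contour further to the left when $\alpha$ is close to $\beta$. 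Establishing this --- that, restricted to $\alpha=\beta$, the error of Theorem \ref{Theorem for all characters} is in fact $O(X^{1-2\Re(r)+\varepsilon})$ and not merely $O(X^{E(r,r)+\varepsilon})$ --- is the step I expect to be the main obstacle; once it is available, assembling the two main terms of the preceding paragraph with this error estimate produces \eqref{Sum of L'/L with removed 2-factors}.
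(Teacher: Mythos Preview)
Your overall strategy---differentiate \eqref{Asymptotic for ratios of all characters} with respect to $\alpha$, set $\alpha=\beta=r$, and control the error term via Cauchy's integral formula on a small circle---is precisely the argument the paper gives, and your handling of the two main terms matches the paper's computations (one small remark: the simplification of the second main term does not require the functional equation of $\zeta_K$; the paper obtains it algebraically from $P(\tfrac12-r,\tfrac12+r)=\zeta_K^{(2)}(2)/\zeta_K^{(2)}(2-2r)$ together with the Laurent expansion of $1/\zeta_K^{(2)}$ at $s=1$).

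The point of divergence is exactly the one you single out as the main obstacle, and here you have in fact been more careful than the paper. Your computation $E(r,r)=1-\Re(r)$ for $0<\Re(r)<\tfrac12$ is correct: the entry $1-\tfrac12\Re(\alpha)-\tfrac12\Re(\beta)$ in \eqref{Nab} dominates $1-\Re(\alpha)-\Re(\beta)$ on the diagonal. Hence Cauchy's bound applied to the error of Theorem~\ref{Theorem for all characters} yields only $O\big((1+|r|)^{5/2+\varepsilon}X^{1-\Re(r)+\varepsilon}\big)$, which is the same order as the second main term. The paper, however, does \emph{not} carry out any refinement of the kind you sketch: in its proof it simply writes that applying Cauchy's formula and then setting $\alpha=\beta=r$ ``from \eqref{Nab}'' gives $(1+|r|)^{5/2+\varepsilon}X^{1-2r+\varepsilon}$, without further comment. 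So the step you flag as unproved is likewise unproved (indeed, unaddressed) in the paper. Your proposed remedy---returning to the multiple Dirichlet series and exploiting the vanishing of the relevant residue on $\alpha=\beta$ to shift the contour associated with the exponent $1-\tfrac12\Re(\alpha)-\tfrac12\Re(\beta)$ further left---is a plausible line of attack, but it is additional work beyond what the paper supplies.
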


	Theorem \ref{Theorem for all characters} will be established using the approach in the proof of \cite[Theorem 1.1]{Cech1} with a refinement on the arguments there.  More precisely, we note that the reciprocal of $\zeta_{K}(2s)$ appears in the expression for $A_K(s,w,z)$ in \eqref{Aswd}. A similar expression involving $(\zeta(2s))^{-1}$ also emerges in the proof of \cite[Theorem 1.1]{Cech1}, where $\zeta(s)$ is the Riemann zeta function. In the proof of \cite[Theorem 1.1]{Cech1}, the factor  $(\zeta(2s))^{-1}$ is treated directly so that one needs to restrict $s$ to the region $\Re(s)>1/2$. Our idea here is to estimate the product of $\zeta_{K}(2s)$ and $A_K(s,w,z)$ instead (see \eqref{Aswboundsumm} below), this allows us to obtain the analytic property of $A_K(s,w,z)$ in a larger region of $s$, which leads to improvements on the error term in \eqref{Asymptotic for ratios of all characters} over \cite[Theorem 1.1]{Cech1}. \newline

  We may also apply Theorem \ref{Theorem for log derivatives} to compute the one-level density of low-lying zeros of the corresponding families of quadratic Hecke $L$-functions, as in \cite[Corollary 1.5]{Cech1} for the family of quadratic Dirichlet $L$-functions using \cite[Theorem 1.4]{Cech1}. However, one checks that in this case we may only do so for test functions whose Fourier transform being supported in $(-1, 1)$, which is inferior to what may be obtained using other methods.  Thus we shall not go in this direction here.

\section{Preliminaries}
\label{sec 2}

\subsection{Imaginary quadratic number fields}
\label{sect: Kronecker}

Recall that for an arbitrary number field $K$,  $\mathcal{O}_K, U_K$ and $D_K$ stand for the ring of integers, the group of units and the discriminant of $K$, respectively.  We say an element $c \in \mathcal O_K$ is odd if $(c, 2)=1$. \newline

In the rest of this section, let $K$ be an imaginary quadratic number field. The following facts concerning an imaginary quadratic number field $K$ can be found in \cite[Section 3.8]{iwakow}. \newline

  The ring of integers $\mathcal{O}_K$ is a free $\mz$ module (see \cite[Section 3.8]{iwakow}) such that $\mathcal{O}_K=\mz+\omega_K \mz$, where
\begin{align*}
   \omega_K & =\begin{cases}
\displaystyle     \frac {1+\sqrt{d}}{2} \qquad & d \equiv 1 \pmod 4, \\ \\
     \sqrt{d} \qquad & d \equiv 2, 3 \pmod 4.
    \end{cases}
\end{align*}
 Note that when $K=\mq(\sqrt{-3})$, we also have $\mathcal{O}_K=\mz+\omega^2_K \mz$.  The group of units are given by
\begin{align}
\label{Uk}
   U_K & =\begin{cases}
     \{ \pm 1 , \pm i \} \qquad & d=-1, \\
     \{ \pm 1, \pm \omega_K, \pm \omega_K^2 \} \qquad & d=-3,  \\
     \{ \pm 1  \} \qquad & \text{other $d$}.
    \end{cases}
\end{align}

  Moreover, the discriminants are given by
\begin{align*}
   D_K & =\begin{cases}
     d \qquad & \text{if $d \equiv 1 \pmod 4$}, \\
     4d \qquad & \text{if $d \equiv 2, 3 \pmod 4$}.
    \end{cases}
\end{align*}

If we write $n=a+b\omega_K$ with $a, b \in \mz$, then
\begin{align}
\label{Norm}
 N(n)=\begin{cases}
   \displaystyle  a^2+ab+b^2\frac {1-d}{4} \qquad & d \equiv 1 \pmod 4,  \\
   \displaystyle  a^2-db^2 \qquad &  d \equiv 2, 3 \pmod 4.
    \end{cases}
\end{align}

   Denote the Kronecker symbol in $\mz$ by $\leg {\cdot}{\cdot}_{\mz}$.  A rational prime ideal $(p) \in \mz$ in $\mathcal O_K$ factors in the following way:
\begin{align*}
    & \leg {D_K}{p}_{\mz} =0, \text{then $p$ ramifies}, \ (p)=\mathfrak{p}^2 \quad \text{with} \quad N(\mathfrak{p})=p, \\
    & \leg {D_K}{p}_{\mz} =1, \text{then $p$ splits}, \ (p)=\mathfrak{p}\overline{\mathfrak{p}} \quad \text{with} \quad  \mathfrak{p} \neq \overline{\mathfrak{p}}
    \quad \text{and} \quad  N(\mathfrak{p})=p, \\
    & \leg {D_K}{p}_{\mz}=-1, \text{then $p$ is inert}, \ (p)=\mathfrak{p} \quad \text{with} \quad  N(\mathfrak{p})=p^2.
\end{align*}

In particular, the rational prime ideal $(2) \in \mq$ splits if $d \equiv 1 \pmod 8$, is inert if $d \equiv 5 \pmod 8$ and ramifies for $d=-1$, $-2$.

\subsection{Primary Elements}
\label{sect: PrimElem}

   It is well-known that when a number field $K$ is of class number one, every ideal of $\mathcal O_K$ is principal, so that one may fix
a unique generator for each non-zero ideal. In this paper, we are interested in ideals that are co-prime to $(2)$ and in this section, we determine for each such ideal a unique generator which we call primary elements. Our choice for these primary elements is based on a result of F. Lemmermeyer \cite[Theorem 12.17]{Lemmermeyer05}, in order to ensure the validity of a proper form of the quadratic reciprocity law, Lemma~\ref{Quadrecgenprim}, for these elements.  \newline

  For $K=\mq(i)$, $(1+i)$ is the only ideal above the rational ideal $(2) \in \mz$.  We define for any $n \in \mathcal O_K$ with $(n, 2)=1$ to be primary if and only if $n \equiv 1 \pmod {(1+i)^3}$ by observing that the group $\left (\mathcal{O}_K / ((1+i)^3) \right )^{\times}$ is isomorphic to $U_K$.  Here and in what follows,  for any $q \in \mathcal{O}_K$, let $G_q := \left( \mathcal{O}_K / (q) \right )^{\times}$ be the group of reduced residue classes modulo $q$, i.e. the multiplicative group of invertible elements in $\mathcal{O}_K / (q)$. For any group $G$, let $G^2$ be the subgroup of $G$ consisting of square elements of $G$.  \newline

  In what follows we consider the case for $K=\mq(\sqrt{d})$ with $d \in \mathcal S$ and $d \neq -1$. Note that we have either $d=4k+1$ for some $k \in \intz$ or $d=-2$. Recall that our choice for the primary elements is based on \cite[Theorem 12.17]{Lemmermeyer05}, which requires the determination of the quotient group $G_4/G^2_4$. We therefore begin by examining the structure of $G_4$. \newline

  We first consider the case $d=4k+1$.  Note that by our discussions in Section \ref{sect: Kronecker}, the rational ideal $(2)$ splits in $\mathcal O_K$ when $2|k$ and is inert otherwise. It follows that
\begin{align}
\label{residuemod2}
\begin{split}
   G_2=
\begin{cases}
 \{1, -\omega_K, 1-\omega_K\}, & 2 \nmid k, \\
  \{1 \}, & 2|k.
\end{cases}
\end{split}
\end{align}
As $|G_2|$ is odd in either case above, the natural homomorphism
\begin{align*}
%%\label{G2G2square}
\begin{split}
   G_2 \rightarrow G^2_2 : g \mapsto g^2,
\end{split}
\end{align*}
  is injective and hence an isomorphism. Now observe that
\begin{align*}
%%\label{residuemod4}
\begin{split}
  G_4= G_2+ 2l, \quad l \in \{ 0, 1, \omega_K, 1+\omega_K\}.
\end{split}
\end{align*}
As $(a+2l)^2 \equiv a^2 \pmod 4$, we see that $G^2_4=G^2_2 \cong G_2$. \newline

  Direct computation shows that
\begin{align}
\label{G2square}
\begin{split}
   G^2_4=G^2_2=
\begin{cases}
 \{1, \omega^2_K=k+\omega_K, (1+\omega_K)^2=k+1-\omega_K \}, & 2 \nmid k, \\
  \{1 \}, & 2|k.
\end{cases}
\end{split}
\end{align}
 Here we remark that the reason we choose $-\omega_K$ instead of $\omega_K$ in the set \eqref{residuemod2} defining $G_2$ is to ensure that when $d=-3$, we have $G^2_2=U^2_K$. \newline

   Consider the exact sequence
\begin{align}
\label{G4G2}
\begin{split}
  0 \rightarrow \ker{\sigma}\rightarrow G_4 \xrightarrow{\sigma} G_2 \rightarrow 0,
\end{split}
\end{align}
 where $\sigma(g)=g \pmod 2$ for any $g \in G_4$. It follows that $G_4 \cong \ker{\sigma} \times G_2 \cong \ker{\sigma} \times G^2_4$. As $\ker{\sigma}$ contains four elements that form a representation of all the residue classes modulo $4$ that are congruent to $1$ modulo $2$, we see that
\begin{align}
\label{classonemod4}
\begin{split}
   \ker{\sigma} = \{ \pm 1, \pm (1+2\omega_K) \} \pmod 4  \cong  <- 1> \times <1+2\omega_K> \pmod 4,
\end{split}
\end{align}
where $<a>$ denotes the cyclic group generated by $a$. \newline

  We then conclude that when $d=4k+1$, we have
\begin{align}
\label{G4}
\begin{split}
   G_4 \cong G^2_4 \times <- 1> \times <1+2\omega_K> \pmod 4.
\end{split}
\end{align}

  When $d =- 2 \pmod 4$, the rational ideal $(2)$ ramifies in $\mathcal O_K$ and we have
\begin{align*}
%%\label{residuemod2d2mod4}
\begin{split}
   G_2=\{1, 1+\omega_K \}.
\end{split}
\end{align*}
  Note that in this case $G^2_2=\{1\}$, but our discussions above imply that
\begin{align*}
%%\label{residuemod2squared2mod4}
\begin{split}
   G^2_4=\{1, (1+\omega_K)^2 \}=\{1, -1+2\omega_K \} \pmod 4.
\end{split}
\end{align*}

  One verifies directly that when $d=-2$,
\begin{align}
\label{residuemod4d2mod4}
\begin{split}
   G_4/G^2_4 = \{ \pm 1, \pm (1+\omega_K) \}=\{ \pm 1\} \times \{1, -(1+\omega_K)\} \pmod 4.
\end{split}
\end{align}

  We further note that if $d \neq -3$, we have $U_K \cong <-1>$ by \eqref{Uk} and we have $G^2_4 \cong U^2_K$ for $d=-3$. Moreover, note that we have
$U_K \cong U^2_K \times <-1>$. It follows from these observations that we can apply the isomorphism in \eqref{G4} and the quotient group given in \eqref{residuemod4d2mod4} to write $G_4$ as a set by
\begin{align*}
%%\label{G4uniform}
\begin{split}
   G_4 =
\begin{cases}
 U_K \times G^2_4 \times <1+2\omega_K> \pmod 4, & d \neq -2, -3, \\
 U_K \times G^2_4 \times \{1, -(1+\omega_K)\} \pmod 4, & d=-2, \\
 U_K \times <1+2\omega_K> \pmod 4, & d=-3.
\end{cases}
\end{split}
\end{align*}
  We then define the primary elements to be the ones that are congruent to the group $G_4/U_K$ modulo $4$ using the above representation. More precisely, the primary elements are the ones 
\begin{align}
\label{primary}
\begin{split}
   \equiv
\begin{cases}
 G^2_4 \times <1+2\omega_K> \pmod 4, & d \neq -2, -3,  \\
  G^2_4 \times \{1, -(1+\omega_K)\}  \pmod 4, & d = -2, \\
 <1+2\omega_K> \pmod 4, & d=-3.
\end{cases}
\end{split}
\end{align}
  It is then easy to see that each ideal co-prime to $2$ has a unique primary generator. Moreover, as $G_4/U_K$ is a group, we see that primary elements are closed under multiplication.

  We close this section by pointing out a relationship between the notion of primary elements for $d=-3$ with that of $E$-primary introduced in \cite[Section 7.3]{Lemmermeyer}.  Recall that any $n=a+b\omega^2_K \in \mathcal O_K$ with $a, b \in \mz$ and $(n, 6)=1$ is defined to be $E$-primary if $n \equiv \pm 1 \pmod 3$ and satisfies
\begin{align}
\label{cubicE}
\begin{split}
   & a+b \equiv 1 \pmod 4, \quad  \text{if} \quad 2 | b,  \\
   & b \equiv 1 \pmod 4, \quad  \text{if} \quad 2 | a,  \\
   & a \equiv 3 \pmod 4, \quad  \text{if} \quad 2 \nmid ab.
\end{split}
\end{align}
    Now, for any primary $n=a+b\omega_K \in \mathcal O_K$ with $a, b \in \mz$ such that $(n,6)=1$, there is a unique $u \in U^2_K=\{ 1, \omega_K, \omega^2_K\}$ such that $un \equiv \pm 1 \pmod 3$. Writing $un$ in the form $c+d\omega^2_K$ using the observation that $\omega_K=-1-\omega^2_K$, one checks using \eqref{cubicE} directly that $un$ is $E$-primary. \newline

%%----------------------------------------------------------------------------
\subsection{Quadratic Hecke characters and quadratic Gauss sums}
\label{sec2.4}
%%----------------------------------------------------------------------------
  Let $K$ be any imaginary quadratic number field. We say an element $n \in \mathcal{O}_K$ is odd if $(n,2)=1$. For an odd prime $\varpi \in \mathcal{O}_{K}$, the quadratic symbol $\leg {\cdot}{\varpi}$ is defined for $a \in \mathcal{O}_{K}$, $(a, \varpi)=1$ by $\leg{a}{\varpi} \equiv a^{(N(\varpi)-1)/2} \pmod{\varpi}$, with $\leg{a}{\varpi} \in \{\pm 1 \}$.  If $\varpi | a$, we define $\leg{a}{\varpi} =0$.  We then extend the quadratic symbol to $\leg {\cdot}{n}$ for any odd $n$ multiplicatively. We further define $\leg {\cdot}{c}=1$ for $c \in U_K$. \newline

 We note the following quadratic reciprocity law concerning primary elements.
\begin{lemma}
\label{Quadrecgenprim}
   Let $K=\mq(\sqrt{d})$ with $d \in \mathcal{S}$. For any co-prime primary elements $n,m$ with $(nm, 2)=1$, we have
\begin{align}
\label{quadreciKprim}
    \leg {n}{m}\leg{m}{n}=(-1)^{(N(n)-1)/2\cdot (N(m)-1)/2}.
\end{align}
\end{lemma}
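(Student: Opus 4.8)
The plan is to reduce the reciprocity law for primary elements in $K=\mq(\sqrt{d})$ to the general quadratic reciprocity law in $\mathcal{O}_K$ established by Lemmermeyer \cite[Theorem 12.17]{Lemmermeyer05}. That result gives, for odd coprime $n,m$, a formula of the shape $\leg{n}{m}\leg{m}{n}=(-1)^{(N(n)-1)/2\cdot(N(m)-1)/2}\cdot \mu(n,m)$, where $\mu(n,m)$ is a correction factor built from local symbols at the primes above $2$ (and, for $d=-3$, possibly a cubic-residue style adjustment), depending only on the residue classes of $n$ and $m$ modulo $4$ (respectively modulo $(1+i)^k$ for $d=-1$, or modulo $4$ for $d=-2$). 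The content of the lemma is that our definition of primary in \eqref{primary}, i.e. restricting to representatives of the subgroup $G_4/U_K$ (and the analogous condition mod $(1+i)^3$ for $d=-1$), forces this correction factor $\mu(n,m)$ to be trivial. So the proof is a finite, case-by-case verification: for each $d\in\mathcal{S}$, exhibit the explicit form of Lemmermeyer's correction factor and check it vanishes on the primary classes.

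First I would treat $d=-1$ separately, since there the relevant modulus is a power of $1+i$ rather than $4$; here primary means $n\equiv 1\pmod{(1+i)^3}$, and \eqref{quadreciKprim} is the classical reciprocity law for the quadratic (biquadratic-restricted) residue symbol in $\mz[i]$, which is standard and can be cited directly. Next I would handle the generic case $d=4k+1$, $d\neq -3$: by \eqref{primary} primary elements lie in $G^2_4\times\langle 1+2\omega_K\rangle \pmod 4$, so I would write $n\equiv n_0 s_n$, $m\equiv m_0 s_m$ with $n_0,m_0\in G^2_4$ squares mod $4$ and $s_n,s_m\in\langle 1+2\omega_K\rangle$, and observe that the correction factor is multiplicative in each argument and trivially vanishes on squares; it then remains to check $\mu(1+2\omega_K,\,1+2\omega_K)=1$ and that $(N(1+2\omega_K)-1)/2$ is even (or more precisely that the sign convention is consistent), which is a single explicit congruence computation using \eqref{Norm}. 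Then I would dispatch $d=-2$ using the representation $G^2_4\times\{1,-(1+\omega_K)\}\pmod 4$ from \eqref{residuemod4d2mod4}, checking $\mu$ on the single extra generator $-(1+\omega_K)$; and finally $d=-3$, where primary means $\equiv\langle 1+2\omega_K\rangle\pmod 4$, invoking the relationship to $E$-primary elements noted at the end of Section~\ref{sect: PrimElem} so that Lemmermeyer's reciprocity for $E$-primary elements (\cite[Section 7.3]{Lemmermeyer} / \cite[Theorem 12.17]{Lemmermeyer05}) applies and the correction factor is again trivial.

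The main obstacle I anticipate is bookkeeping rather than any conceptual difficulty: one must state Lemmermeyer's theorem in a form precise enough to read off the correction factor as an explicit function of residues mod $4$ (or mod $(1+i)^3$), and then verify that the quotient-group structure $G_4/U_K$ was set up in Section~\ref{sect: PrimElem} precisely so that these correction factors annihilate. In particular, the subtle point flagged in the text—the choice of $-\omega_K$ rather than $\omega_K$ in \eqref{residuemod2}, ensuring $G^2_2=U^2_K$ when $d=-3$—is exactly what makes the $d=-3$ case close, so I would make sure the write-up connects that normalization to the vanishing of the correction factor. The proof is therefore short: one sentence citing \cite[Theorem 12.17]{Lemmermeyer05}, one line per value of $d$ (or per family of $d$) verifying the correction factor is $1$ on primary classes, using \eqref{primary}, \eqref{G2square}, \eqref{classonemod4}, \eqref{residuemod4d2mod4}, and \eqref{cubicE} as appropriate.
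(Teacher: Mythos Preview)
Your plan is sound and would work, but the paper's own proof is much shorter and takes a different route: rather than unpacking Lemmermeyer's correction factor and verifying its triviality on primary classes case by case, the authors simply cite their own earlier papers. For $d\in\mathcal{S}\setminus\{-1,-3\}$ they invoke \cite[Lemma~2.4]{G&Zhao16} directly (where the verification you describe was presumably carried out), remark that the same argument covers $d=-3$, and for $d=-1$ they observe that $N(n)\equiv 1\pmod 4$ for every odd $n$, so the right-hand side of \eqref{quadreciKprim} is identically $1$ and the claim reduces to the known symmetry \cite[(2.1)]{G&Zhao4} of the quadratic symbol in $\mz[i]$ for primary elements.

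So the difference is one of packaging: your approach reconstructs the verification from Lemmermeyer's general theorem and the explicit coset structure worked out in Section~\ref{sect: PrimElem}, which is more self-contained and makes transparent \emph{why} the primary normalization was chosen; the paper instead leans on its prior publications, which keeps the present proof to three lines but leaves the actual computation elsewhere. Either is acceptable. One small caution in your sketch: the remark that one should check ``$(N(1+2\omega_K)-1)/2$ is even'' is not quite the right condition---for several $d$ (e.g.\ $d=-7$, where $N(1+2\omega_K)=11$) that quantity is odd, so what you really need is that the correction factor $\mu$ matches the sign $(-1)^{(N(n)-1)/2\cdot(N(m)-1)/2}$ on the primary generators, not that both are separately trivial.
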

\begin{proof}
  The assertion of the lemma is in \cite[Lemma 2.4]{G&Zhao16} for $d \in S \setminus \{-1, -3\}$ and one checks the arguments there also gives that \eqref{quadreciKprim} holds for $d=-3$.  If $d=-1$, upon noting that $N(n)\equiv 1 \pmod 4$ for any odd $n$, we see that \eqref{quadreciKprim} follows from \cite[(2.1)]{G&Zhao4}. This completes the proof.
\end{proof}

   We also note that it follows from the definition that for any odd $n \in \mathcal O_K$,
\begin{align}
\label{2.051}
  \leg {-1}{n}=(-1)^{(N(n)-1)/2}.
\end{align}
  Moreover, the following supplementary laws hold for primary odd $n=a+bi \in \mathcal O_K$ with $a, b \in \mz$,
\begin{align}
\label{supprulequadtic}
\begin{split}
  & \leg {i}{n}=(-1)^{(1-a)/2}, \quad n=a+bi \in \mathcal O_K, a, b \in \mz, K=\mq(i), \\
  & \leg {1-\omega_K}{n}=1, \quad K=\mq(\sqrt{-3}).
\end{split}
\end{align}
 The first identity above follows from  \cite[Lemma 8.2.1]{BEW} and the second follows by noting that $(1-\omega_K)^3=1$. \newline

   We follow the nomenclature  of \cite[Section 3.8]{iwakow} to define a Dirichlet character $\chi$ modulo $(q) \neq (0)$ to be a homomorphism
\begin{align*}
%%\label{chi}
  \chi: \left (\mathcal{O}_K / (q) \right )^{\times}  \rightarrow S^1 :=\{ z \in \mc :  |z|=1 \}.
\end{align*}
  We say that $\chi$ is primitive modulo $(q)$ if it does not factor through $\left (\mathcal{O}_K / (q') \right )^{\times}$ for any divisor $q'$ of $q$ with $N(q')<N(q)$. \newline

Suppose that $\chi(u)=1$ for any $u \in U_K$.  Then $\chi$ may be regarded as defined on ideals of $\mathcal O_K$ since every ideal is principal. In this case, we say that such a character $\chi$ is a Hecke character modulo $(q)$ of trivial infinite type. We also say such a Hecke character is primitive if it is primitive as a Dirichlet character. We say that $\chi$ is a Hecke character modulo $q$ instead of modulo $(q)$ if there is ambiguity. \newline

    For any Hecke character $\chi$ modulo $q$ of trivial infinite type and any $k \in \mathcal O_K$, we define the associated Gauss sum $g_K(k, \chi)$ by
\begin{align*}
%%\label{g2}
 g_K(k,\chi) = \sum_{x \shortmod{q}} \chi(x) \widetilde{e}_K\leg{kx}{q}, \quad \mbox{where} \quad \widetilde{e}_K(z) =\exp \left( 2\pi i  \left( \frac {z}{\sqrt{D_K}} -
\frac {\overline{z}}{\sqrt{D_K}} \right) \right).
\end{align*}

We write $g_K(\chi)$ for $g_K(1,\chi)$ and note that our definition above for $g_K(k,\chi)$ is independent of the choice of a generator for $(q)$.
We define similarly $g_K(k, \chi_n)$, $g_K(\chi_n)$. The following result evaluates $g_K(\chi_{n})$ for any primary $n$.
\begin{lemma}
\label{Gausssum}
    Let $K=\mq(\sqrt{d})$ with $d \in \mathcal S$ and $n$ a primary element in $\mathcal{O}_K$. Then
\begin{align}
\label{gvalue}
   g_K(\chi_{n})= \begin{cases}
    \sqrt{N(n)} \qquad & \text{for all} \; d, \; N(n) \equiv 1 \pmod 4, \\
    -i \sqrt{N(n)} \qquad & \text{for} \; d \neq -2,-7, \; N(n) \equiv -1 \pmod 4,\\
     i \sqrt{N(n)} \qquad & \text{for} \; d = -2, -7, \; N(n) \equiv -1 \pmod 4.
\end{cases}
\end{align}
\end{lemma}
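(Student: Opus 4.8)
The plan is to reduce the evaluation of $g_K(\chi_n)$ for a general primary $n$ to the prime case by multiplicativity, and then to settle the prime case by a classical Gauss-sum argument. First I would establish the twisted multiplicativity of $k \mapsto g_K(k,\chi_n)$ in the modulus: for coprime primary $m,n$ one has $g_K(\chi_{mn}) = \leg{m}{n}\leg{n}{m}\, g_K(\chi_m)\, g_K(\chi_n)$, which follows by the Chinese Remainder Theorem in $\mathcal O_K/(mn)$ together with the quadratic reciprocity law of Lemma~\ref{Quadrecgenprim}. Since $(-1)^{(N(m)-1)/2 \cdot (N(n)-1)/2}$ records exactly whether both $N(m),N(n) \equiv -1 \pmod 4$, combining this with the case $N(n) \equiv 1 \pmod 4$ giving $\sqrt{N(n)}$ will show that the sign in \eqref{gvalue} is multiplicative in the right way once we know it on primes and prime powers. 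One also needs the elementary fact that $g_K(\chi_{\varpi^k}) = 0$ when $k \ge 2$ is even only if... — more precisely $g_K(\chi_{\varpi^2})$ vanishes, so only squarefree $n$ contribute, consistent with $\chi_n$ being the relevant primitive character; I would dispose of prime powers by the standard reduction $g_K(k,\chi_{\varpi^j}) = N(\varpi) g_K(k/\varpi, \chi_{\varpi^{j-2}})$ type identity.

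Next I would treat the prime case: for an odd prime $\varpi$ with $(\varpi,2)=1$, the quadratic Gauss sum $g_K(\chi_\varpi)$ satisfies $|g_K(\chi_\varpi)|^2 = N(\varpi)$ by the usual orthogonality computation, so $g_K(\chi_\varpi) = \epsilon(\varpi)\sqrt{N(\varpi)}$ for some fourth root of unity $\epsilon(\varpi)$ (in fact $g_K(\chi_\varpi)^2 = \leg{-1}{\varpi} N(\varpi)$ by the reflection $x \mapsto -x$, so $\epsilon(\varpi)^2 = \leg{-1}{\varpi} = (-1)^{(N(\varpi)-1)/2}$ using \eqref{2.051}; hence $\epsilon(\varpi) = \pm 1$ when $N(\varpi) \equiv 1 \pmod 4$ and $\epsilon(\varpi) = \pm i$ when $N(\varpi) \equiv -1 \pmod 4$). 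Determining the actual sign is the genuine content. Here I would invoke the explicit determination of quadratic Gauss signs in the relevant quadratic fields — this is exactly the kind of computation carried out in \cite{BEW} and in the companion papers \cite{G&Zhao4}, \cite{G&Zhao16} — handling split primes (where $N(\varpi) = p$ is a rational prime and the sign reduces, via a change of additive character, to the classical sign of the rational Gauss sum $\sum_{x \bmod p} \leg{x}{p}_{\mz} e(x/p)$, which is $1$ or $i$ according to $p \equiv 1$ or $3 \pmod 4$) and inert primes (where $N(\varpi) = p^2 \equiv 1 \pmod 4$ always, forcing $\epsilon(\varpi) = \pm 1$, and the sign is pinned down by Stickelberger-type or direct congruence arguments) separately, and tracking how the choice of $\sqrt{D_K}$ in the additive character $\widetilde e_K$ and the choice of primary representative affect $\epsilon(\varpi)$.

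I expect the main obstacle to be precisely this last sign determination, and in particular the appearance of the exceptional cases $d = -2, -7$ in \eqref{gvalue}. The sign $\epsilon(\varpi)$ depends on a chosen square root of $D_K$ entering $\widetilde e_K$, and the normalization of the primary representative modulo $4$; the exceptional behaviour for $d=-2,-7$ should trace back to the value of $\leg{2}{\cdot}$ or equivalently to $D_K \bmod 8$ (note $D_K = -8$ for $d=-2$ and $D_K = -7$ for $d=-7$, the only fields in $\mathcal S$ with $D_K \equiv 1 \pmod 8$ among the odd discriminants being precisely $d=-7$), so I would isolate the discrepancy by carefully comparing $\widetilde e_K$ against the "standard" additive character used in the cited sign evaluations. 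Once the prime values are fixed, extending to squarefree primary $n$ is purely formal via the multiplicativity established in the first step, using that $N(n) \equiv 1 \pmod 4$ iff an even number of prime factors of $n$ have norm $\equiv -1 \pmod 4$ and that the reciprocity factors telescope into the single global sign. The remaining routine check is that prime-power moduli contribute $0$ or reduce correctly, so that $g_K(\chi_n)$ is supported on squarefree $n$ with the stated value.
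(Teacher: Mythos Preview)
Your overall architecture matches the paper's: establish the twisted multiplicativity
\[
g_K(\chi_{n_1n_2})=\leg{n_2}{n_1}\leg{n_1}{n_2}g_K(\chi_{n_1})g_K(\chi_{n_2})
\]
for coprime primary $n_1,n_2$ via CRT and Lemma~\ref{Quadrecgenprim}, then reduce to primes. Your observation that $g_K(\chi_\varpi)^2=\leg{-1}{\varpi}N(\varpi)$ and hence $\epsilon(\varpi)\in\{\pm1\}$ or $\{\pm i\}$ according to $N(\varpi)\bmod 4$ is also exactly right.

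The gap is in the actual sign determination for primes. You speculate that the exceptional behaviour for $d=-2,-7$ comes from $D_K\bmod 8$ and plan to ``compare $\widetilde e_K$ against the standard additive character''; this is not the mechanism the paper uses, and it is not clear your route would pin down the sign without essentially redoing the computation in \cite{G&Zhao16}. The paper instead imports \cite[Lemma~2.6]{G&Zhao16}, which already gives $g_K(\chi_\varpi)$ for \emph{$P$-primary} primes (a different normalisation modulo $4$), and then does the concrete bookkeeping: it checks, case by case on the description \eqref{primary} of primary elements, that a primary prime $\varpi$ with $N(\varpi)\equiv -1\pmod 4$ is itself $P$-primary when $d\neq -2,-7$, whereas $-\varpi$ is $P$-primary when $d=-2,-7$. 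Since $g_K(\chi_{-\varpi})=\leg{-1}{\varpi}g_K(\chi_\varpi)=-g_K(\chi_\varpi)$ in the latter case, this is exactly what produces the sign flip in \eqref{gvalue}. Your proposal does not identify this primary/$P$-primary comparison, and without it the source of the $d=-2,-7$ anomaly remains heuristic.

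Two minor points. First, your discussion of prime powers is a red herring: the multiplicativity you state only applies to coprime factors, so neither your argument nor the paper's actually covers non-squarefree $n$ (and indeed $g_K(\chi_{\varpi^2})=0$, so \eqref{gvalue} as literally stated fails there; the lemma is only ever applied to squarefree moduli). Second, for inert primes $N(\varpi)=p^2\equiv 1\pmod 4$ always, but you still need the sign to be $+1$ rather than $-1$; ``Stickelberger-type or direct congruence arguments'' is too vague here, and again the paper simply inherits this from the $P$-primary result in \cite{G&Zhao16}.
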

\begin{proof}
  We consider \eqref{gvalue} for the case of $n$ being a primary prime first.  Note that this has been established for the case $d=-1$ in \cite[Lemma 2.2]{G&Zhao4} for primary primes. In the rest of the proof, we assume that $d \neq -1$. We note that it is shown in \cite[Lemma 2.6]{G&Zhao16} that for any $P$-primary prime $\varpi$ that is co-prime to $2D_K$ and also to $(1-d)/4$ when $d \equiv 1 \pmod 4$,
\begin{align}
\label{gprimevalue}
   g_K(\chi_{\varpi})= \begin{cases}
    \sqrt{N(\varpi)} \qquad & \text{for all} \; d, \; N(\varpi) \equiv 1 \pmod 4, \\
    -i \sqrt{N(\varpi)} \qquad & \text{for} \; d \neq -2,-7, \; N(\varpi) \equiv -1 \pmod 4.
\end{cases}
\end{align}
  Here we recall that when $d \neq -2$,  an odd element $n=a+b\omega_K$ with $a, b \in \mz$ is called $P$-primary if $b \equiv 1 \pmod 4$ or $a+b(1-d)/4 \equiv -1 \pmod 4$ when $(b, 2) \neq 1$. When $d = -2$,  an odd element $n=a+b\omega_K$ with $a, b \in \mz, b \neq 0$ is called $P$-primary when $b=2^kb'$ with $k, b' \in \mz, k \geq 0, b' \equiv 1 \pmod 4$. The arguments in the proof of \cite[Lemma 2.6]{G&Zhao16} in fact imply that this is indeed true for any $P$-primary prime. To see this, first note that when a prime $\varpi$ satisfies $N(\varpi)=p$ with $p$ a rational prime.  Then we write $\varpi=a+b\omega_K$ to see via \eqref{Norm} that
\begin{align*}
%%\label{p}
   p & =\begin{cases}
    \displaystyle a^2+ab+b^2\frac {1-d}{4}, \qquad & \text{if} \; d \equiv 1 \pmod 4, \\
     \displaystyle a^2-db^2,  \qquad & \text{if}  \; d \equiv 2, 3 \pmod 4.
    \end{cases}
\end{align*}
   The above then implies that $(ab,p)=1$. This is clear for the case $d \equiv 2, 3 \pmod 4$ since the above expression implies that $a^2-db^2>p$ when $(p, ab)>1$. On the other hand, when $d \equiv 1 \pmod 4$, we have $(1-d)/4 \geq 1$ so that $a^2+ab+b^2\frac {1-d}{4} \geq a^2+ab+b^2>p$ when $(p, ab)>1$ and $a, b$ are both non-negative or non-positive.  When $ab<0$, we note that $a^2+ab+b^2=(a+b)^2-ab>p$ when $(p, ab)>1$. So in either case this we must have $(ab,p)=1$. \newline

  For any fixed prime $\varpi$, we define
\begin{align*}
  E_K := \sum_{x \bmod \varpi}\tilde{e}_K \left( \frac x \varpi \right).
\end{align*}
  Let $c \pmod \varpi$ be such that $\tilde{e}_K \left( \frac c \varpi \right) \neq 1$. We note that such $c$ must exist, for otherwise $g_K(\chi_{\varpi})=\sum_{x \pmod{q}} \chi(x)=0$, contradicting the well-known fact that $|g_K(\chi_{\varpi})|=N(\varpi)^{1/2}$. Then
\begin{align*}
  \tilde{e}_K \left( \frac c \varpi \right)E_K = \sum_{x \bmod \varpi}\tilde{e}_K \left( \frac {x+c} \varpi \right)=E_K.
\end{align*}
  The above then implies that $E_K=0$. Now the arguments given in the proof of \cite[Lemma 2.6]{G&Zhao16} carry through with the above observations to show that the assertion of the lemma is valid for any $P$-primary prime $\varpi$ in \cite[Lemma 2.6]{G&Zhao16}. \newline

  Now, note that we have the easily verified relation that for any odd $n \in \mathcal O_K$ and any $u \in U_K$,
\begin{align*}
%%\label{gunit}
   g_K(\chi_{un})=\leg {u}{n}g_K(\chi_{n}).
\end{align*}
 As $\leg {u}{n}=1$ for any $u \in U_K$ and any odd $n$ with $N(n) \equiv 1 \pmod 4$ by \eqref{2.051} and \eqref{supprulequadtic},  we see that in order to establish \eqref{gvalue} for the case $N(n)\equiv 1 \pmod 4$, it suffices to show that this is so for any generator of the ideal $(n)$. In particular,  the validity of \eqref{gprimevalue} implies that \eqref{gvalue} is true for any primary prime $\omega$ when $N(\varpi)\equiv 1 \pmod 4$. \newline

  Next, one checks via \eqref{primary} that when a primary prime $\varpi$ has norm congrudent to $-1$ modulo $4$, then it is also $P$-primary when $d \neq -2, -7$, while $-\varpi$ is $P$-primary when $d = -2, -7$. In fact, as $N(c)\equiv 1 \pmod 4$ for any $c \in G^2_4$, it suffices to check this for elements $c \in <1+2\omega_K>$ for $d \neq -2$ and for elements $c \in \{ 1, -(1+\omega_K)\}$ when $d=-2$, which in turn can be easily verified. It follows from this and \eqref{gprimevalue} that \eqref{gvalue} holds for all primary primes. \newline

  Lastly, to establish the general case, we note that it follows from the definition of $g_K$ that for primary $n_1, n_2$ with $(n_1, n_2)=1$, we have
\begin{align*}
   g_{K}(\chi_{n_1n_2}) =& \leg{n_2}{n_1}\leg{n_1}{n_2}g_{K}(\chi_{n_1}) g_{K}(\chi_{n_1}).
\end{align*}
  The general case of the assertion of the lemma now follows from the above, the case when $n$ is primary prime and Lemma \ref{Quadrecgenprim} by induction on the number of primary primes dividing $n$.
\end{proof}

 Recall that $\chi^{(c_Kc)}$ is a primitive quadratic character of trivial infinite type for any square-free $c$. The associated Hecke $L$-function satisfies then functional equation given in \eqref{fneqn} below. It is expected that the root number $W(\chi)$ there is $1$ for primitive quadratic Hecke characters. Our next lemma confirms this.
\begin{lemma}
\label{lem: primquadGausssum}
  Let $K=\mq(\sqrt{d})$ with $d \in \mathcal S$. For any odd, square-free $c \in \mathcal{O}_K$, we have
\begin{align} \label{primquadGausssum}
 g_K(\chi^{(c_Kc)})=\sqrt{N(c_Kc )}.
\end{align}
\end{lemma}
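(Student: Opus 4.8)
The plan is to reduce the computation of $g_K(\chi^{(c_Kc)})$ to the already-computed values $g_K(\chi_m)$ for primary $m$ from Lemma~\ref{Gausssum}, using the relationship between the two quadratic residue symbols $\chi^{(m)}=\leg{m}{\cdot}$ and $\chi_m=\leg{\cdot}{m}$. First I would recall from the relevant references (cf. the constructions in \cite{G&Zhao4} and \cite{G&Zhao2022-4}) how $\chi^{(c_Kc)}$ is expressed in terms of $\chi_{c'}$ for a suitable primary generator $c'$ of the ideal $(c)$ together with the fixed unit/prime-power factor coming from $c_K$. Concretely, writing $c=uc'$ with $c'$ primary and $u\in U_K$, one has $\chi^{(c_Kc)}=\leg{c_Kc}{\cdot}$, and the interplay between $\leg{c_Kc}{\cdot}$ and $\leg{\cdot}{c_Kc}$ is governed by the quadratic reciprocity law of Lemma~\ref{Quadrecgenprim} together with the supplementary laws \eqref{2.051} and \eqref{supprulequadtic}. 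The point of the specific choice of $c_K$ (namely $(1+i)^5$, $4\sqrt{-2}$, or $8$) is precisely to absorb the ramified-prime contributions and any sign ambiguities so that $\chi^{(c_Kc)}$ becomes, up to the reciprocity sign, the character $\chi_{c'}$ on odd arguments, making it primitive of trivial infinite type.

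Next I would compute the Gauss sum. The cleanest route is multiplicativity: by the definition of $g_K$ and the computation already carried out in the proof of Lemma~\ref{Gausssum} (the splitting $g_K(\chi_{n_1n_2})=\leg{n_2}{n_1}\leg{n_1}{n_2}g_K(\chi_{n_1})g_K(\chi_{n_2})$ for coprime primary $n_1,n_2$), it suffices to handle the factor $g_K(\chi^{(c_Kc)})$ prime by prime, i.e. for $c$ equal to a single primary prime $\varpi$ and for the fixed modulus $c_K$, and then reassemble. For the odd square-free part $c'$ one invokes Lemma~\ref{Gausssum} directly: $g_K(\chi_{c'})=\sqrt{N(c')}$ when $N(c')\equiv1\pmod 4$ and $\mp i\sqrt{N(c')}$ when $N(c')\equiv-1\pmod 4$ (with sign depending on whether $d\in\{-2,-7\}$). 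The residual factors — the unit $u$ relating $c$ to $c'$, and the $c_K$-part — then have to be shown to contribute exactly the compensating sign so that the product collapses to $+\sqrt{N(c_Kc)}$. For $d=-1$ this is essentially \cite[Lemma 2.2]{G&Zhao4}; for the other $d$ one does the analogous bookkeeping, using $\leg{u}{n}=1$ for $N(n)\equiv1\pmod4$ and the explicit supplementary laws to pin down the remaining cases, noting $N(c_K)$ is a perfect power of $2$ so $\sqrt{N(c_Kc)}$ is unambiguous.

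The main obstacle I expect is the sign bookkeeping in the case $N(c)\equiv-1\pmod 4$: there $g_K(\chi_{c'})$ carries a factor $\pm i$, and one must verify that the Gauss sum attached to the $c_K$-factor (a Gauss sum modulo a ramified prime power, which is a classical but field-dependent quantity) supplies precisely the factor $\mp i$ needed to cancel it, uniformly over the nine fields in $\mathcal S$ and taking proper account of the two exceptional fields $d=-2,-7$. This amounts to a finite check, split according to the factorization type of $2$ (ramified for $d=-1,-2$, split for $d\equiv1\pmod 8$, inert for $d\equiv5\pmod 8$) and according to $N(c)\bmod 4$; the reciprocity law Lemma~\ref{Quadrecgenprim}, the relation $N(n)\equiv1\pmod4$ whenever it holds, and the closure of primary elements under multiplication are the tools that make each case routine. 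Once the prime case and the $c_K$-factor are settled, induction on the number of primary prime divisors of $c$ — exactly as at the end of the proof of Lemma~\ref{Gausssum} — finishes the argument, since all the cross terms $\leg{n_2}{n_1}\leg{n_1}{n_2}$ are $+1$ by quadratic reciprocity together with the parity of the norms of primary elements.
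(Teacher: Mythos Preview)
Your overall strategy matches the paper's: separate the Gauss sum modulo $c_Kc$ into a piece governed by $c$ and a piece governed by $c_K$, invoke Lemma~\ref{Gausssum} for the former, and verify by a finite computation that the latter supplies exactly the compensating sign. The paper executes this via the Chinese remainder theorem, writing $x\equiv c_Ky+cz\pmod{c_Kc}$, converting $\leg{c}{\cdot}$ into $\leg{\cdot}{c}$ through Lemma~\ref{Quadrecgenprim}, and then evaluating the residual sum over $z\bmod c_K$ explicitly, case by case in $d$ and in $N(c)\bmod 4$. That explicit evaluation --- working out the structure of $G_{c_K}$ and summing the character against the additive character --- is your ``finite check'' and in fact occupies most of the paper's argument, so you should not expect it to be a throwaway step.

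One point in your write-up needs correction. Your final claim that ``all the cross terms $\leg{n_2}{n_1}\leg{n_1}{n_2}$ are $+1$'' is false: primary elements can have norm $\equiv -1\pmod 4$ (this is exactly why Lemma~\ref{Gausssum} has three cases), and by \eqref{quadreciKprim} the cross term equals $-1$ when both norms are $\equiv -1$. The induction at the end of Lemma~\ref{Gausssum} works not because these terms are trivial but because the $-1$ combines correctly with the two factors of $\pm i$. The paper sidesteps this entirely by applying Lemma~\ref{Gausssum} once to the full square-free $c$ rather than decomposing $c$ prime by prime; this is cleaner and renders your proposed induction on the prime divisors of $c$ unnecessary.
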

\begin{proof}
   Note first that \eqref{primquadGausssum} is established in \cite[Lemma 2.2]{Gao2} for $d=-1$.  So we may assume that $d \neq -1$ in the rest of the proof. It follows from the Chinese remainder theorem that $x = c_Ky +c z$ varies over the residue class modulo $c_Kc$ as $y$ and $z$ vary over the residue class modulo $c$ and $c_K$, respectively.  Thus
\begin{align*}
 g_K(\chi^{(c_Kc)})=& \sum_{z \bmod{c_K}}\sum_{y \bmod{c}} \leg{c_K}{c_Ky +c z}\leg{c}{c_Ky +c z} \widetilde{e}_K\leg{y}{c}\widetilde{e}_K\leg{z}{c_K}.
\end{align*}
   As $\chi^{(c_K)}$ is a Hecke character of trivial infinite type  modulo $c_K$, we get that
\begin{align}
\label{1+ireci}
  \leg{c_K}{c_Ky +c z}=\chi^{(c_K)}(c_Ky +c z)=\chi^{(c_K)}(c z).
\end{align}

   On the other hand, let $s(z)$ denote the unique element in $U_K$ such that $s(z)z$ is primary for any $(z,2)=1$. It follows from the quadratic reciprocity law \eqref{quadreciKprim} and the observation that $N(cz) \equiv N(c_Ky +c z) \pmod 4$ that
\begin{align}
\label{varpireci}
 \leg{c}{c_Ky +c z}=\leg {s(z)c_ky}{c}(-1)^{((N(c)-1)/2)((N(s(z)cz)-1)/2)}.
\end{align}

   We then conclude from \eqref{1+ireci} and \eqref{varpireci} that
\begin{align*}
%%\label{gexpression}
\begin{split}
  g_K(\chi^{(c_Kc)})=& \sum_{z \bmod{c_K}}\sum_{y \bmod{c}} \leg{c_K}{c z}\leg {s(z)c_Ky}{c} (-1)^{((N(c)-1)/2)((N(s(z)cz)-1)/2)}\widetilde{e}_K\leg{y}{c}\widetilde{e}_K\leg{z}{c_K} \\
=& \sum_{z \bmod{c_K}}\leg{c_K}{z} \leg {s(z)}{c} (-1)^{((N(c)-1)/2)((N(s(z)cz)-1)/2)} \widetilde{e}_K\leg{z}{c_K}   \sum_{y \bmod{c}}\leg {y}{c} \widetilde{e}_K\leg{y}{c}.
\end{split}
\end{align*}

  Observe that when $N(c) \equiv 1 \pmod 4$, we have $\leg {s(z)c_Ky}{c}=1$ and that $((N(c)-1)/2)((N(s(z)cz)-1)/2) \equiv 0 \pmod 2$. While when $N(c) \equiv -1 \pmod 4$,
\begin{align*}
%%\label{Nvalue}
 \frac {N(c)-1}{2}\frac {N(s(z)cz)-1}{2} \equiv \frac {N(s(z)cz)-1}{2} \equiv \frac {N(c)-1}{2}+\frac {N(s(z))-1}{2}+\frac {N(z)-1}{2} \equiv 1+\frac {N(z)-1}{2} \pmod 2.
\end{align*}

   We conclude from the above and Lemma \ref{Gausssum} that
\begin{align}
\label{gexpressionsimplified}
\begin{split}
  g_K(\chi^{(c_Kc)})=& \begin{cases}
    \sqrt{N(c)}\displaystyle \sum_{z \bmod{c_K}}\leg{c_K}{z}  \widetilde{e}_K\leg{z}{c_K} \qquad & \text{for all} \; d, \; N(c) \equiv 1 \pmod 4, \\
    i \sqrt{N(c)} \displaystyle \sum_{z \bmod{c_K}}\leg{c_K}{z} \leg {s(z)}{c}(-1)^{(N(z)-1)/2} \widetilde{e}_K\leg{z}{c_K} \qquad & \text{for} \; d \neq -2,-7, \; N(c) \equiv -1 \pmod 4,\\
     -i \sqrt{N(c)}\displaystyle \sum_{z \bmod{c_K}}\leg{c_K}{z} \leg {s(z)}{c}(-1)^{(N(z)-1)/2} \widetilde{e}_K\leg{z}{c_K} \qquad & \text{for} \; d = -2, -7, \; N(c) \equiv -1 \pmod 4.
\end{cases}
\end{split}
\end{align}

  Recall that $c_K=8$ if $d \neq -2$. Similar to \eqref{G4G2}, we see that as sets,
\begin{align*}
%%\label{classonemod8}
\begin{split}
   G_8 = G_4 \times \{ 1, 5, 1+4\omega_K, 5+4\omega_K \} \pmod 8,
\end{split}
\end{align*}
  We further note that for $a, b \in \mz$,
\begin{align*}
%%\label{evaluecK8}
  \widetilde{e}_K\leg{a+b\omega_K}{c_K}=e \leg {b}{c_K}.
\end{align*}

  Note also the following supplementary law (see \cite[Propostion 4.2(iii)]{Lemmermeyer}) that asserts for $(m, 2)=1, m \in \mathcal O_K$,
\begin{align}
\label{2.05}
   \leg {2}{m}=\leg {2}{N(m)}_{\mz}.
\end{align}

  As we have $N(m) \equiv N(5m) \pmod 8$ for every $m \in G_4$, we deduce from \eqref{2.05} that $\leg {c_K}{m}=\leg {c_K}{5m}$ for these $m$. On the other hand, we have $e(b/8)=-e(5b/8)$ for any odd $b \in \mz$.  It follows that
\begin{align*}
%%\label{sum15}
\begin{split}
  \sum_{\substack{z=m, 5m, m \in G_4 \\ m=a+b\omega_K, a, b \in \mz}}\leg{c_K}{z}\widetilde{e}_K\leg{z}{c_K}=\sum_{\substack{z=m, 5m, m \in G_4 \\ m=a+b\omega_K, a, b \in \mz \\ 2|b}}\leg{c_K}{z}\widetilde{e}_K\leg{z}{c_K}.
\end{split}
\end{align*}
  We observe from \eqref{G2square} and \eqref{G4} that there are only four elements in $G_4$ that can be written as $a+b\omega_K$ with $a, b \in \mz$, $2|b$. These elements are precisely those in the set given in \eqref{classonemod4}. Note that the norms of these elements are all $\equiv \pm 1 \pmod 8$.  
  Using again $\leg{c_K}{m}=\leg{c_K}{5m}$ for any $m \in G_4$, we see that
\begin{align*}
%%\label{sum15simplified}
\begin{split}
  \sum_{\substack{z=m, 5m, m \in G_4 \\ m=a+b\omega_K, a, b \in \mz \\ 2|b}}\leg{c_K}{z}\widetilde{e}_K\leg{z}{c_K}=2\sum_{\substack{z=m \in G_4 \\ m=a+b\omega_K, a, b \in \mz \\ 2|b}}\leg{c_K}{z}\widetilde{e}_K\leg{z}{c_K}.
\end{split}
\end{align*}
  Using \eqref{2.05}, a direct computation leads to
\begin{align}
\label{sum15result}
\begin{split}
  \sum_{\substack{z=m, 5m, m \in G_4 \\ m=a+b\omega_K, a, b \in \mz \\ 2|b}}\leg{c_K}{z}\widetilde{e}_K\leg{z}{c_K}=4.
\end{split}
\end{align}

  Similarly, we have $N((1+4\omega_K)m) \equiv N((5+4\omega_K)m) \pmod 8$ for every $m \in G_4$, so that $\leg {c_K}{(1+4\omega_K)m}=\leg {c_K}{(5+4\omega_K)m}$ for these $c$. Moreover, if we write $m=a+b\omega_K, (1+4\omega_K)m=a'+b'\omega_K$ with $a, b, a', b' \in \mz$, then it is easy to see that $b' \equiv b \pmod 4$.  It follows that
\begin{align*}
%%\label{sum15-1}
\begin{split}
  \sum_{\substack{z=(1+4\omega_K)m, (5+4\omega_K)m, m \in G_4 \\ m=a+b\omega_K, a, b \in \mz}}\leg{c_K}{z}\widetilde{e}_K\leg{z}{c_K}=& \sum_{\substack{z=(1+4\omega_K)m, (5+4\omega_K)m, m \in G_4 \\ m=a+b\omega_K, a, b \in \mz\\  2|b}}\leg{c_K}{z}\widetilde{e}_K\leg{z}{c_K} \\
=& 2\sum_{\substack{z=(1+4\omega_K)m, m \in G_4 \\ m=a+b\omega_K, a, b \in \mz\\  2|b}}\leg{c_K}{z}\widetilde{e}_K\leg{z}{c_K}.
\end{split}
\end{align*}

A direct computation renders
\begin{align}
\label{sum15-1result}
\begin{split}
  \sum_{\substack{z=(1+4\omega_K)m, (5+4\omega_K)m, m \in G_4 \\ m=a+b\omega_K, a, b \in \mz}}\leg{c_K}{z}\widetilde{e}_K\leg{z}{c_K}=4.
\end{split}
\end{align}

  Our discussions above apply similarly to the sum
\begin{align*}
%%\label{sumtwist}
\begin{split}
\sum_{z \bmod{c_K}}\leg{c_K}{z} \leg {s(z)}{c}(-1)^{(N(z)-1)/2} \widetilde{e}_K\leg{z}{c_K}.
\end{split}
\end{align*}

As $s(z)=s(5z)$ and that $(N(5z)-1)/2 \equiv (N(5)-1)/2+(N(z)-1)/2 \equiv (N(z)-1)/2 \pmod 2$, we see that
\begin{align}
\label{sum15twist}
\begin{split}
  \sum_{\substack{z=m, 5m, m \in G_4 \\ m=a+b\omega_K, a, b \in \mz}}\leg{c_K}{z}\leg {s(z)}{c}(-1)^{(N(z)-1)/2}\widetilde{e}_K\leg{z}{c_K}=& 2\sum_{\substack{z=m, m \in G_4 \\ m=a+b\omega_K, a, b \in \mz \\ 2|b}}\leg{c_K}{z}\leg {s(z)}{c}(-1)^{(N(z)-1)/2}\widetilde{e}_K\leg{z}{c_K} \\
=& \begin{cases}
-4i, & d \neq -7, \\
4i, & d = -7.
\end{cases}
\end{split}
\end{align}
 Here we have $N(1+2\omega_K)  \equiv-1 \pmod 8$ when $d \neq -7$ and  $N(1+2\omega_K)  \equiv 3 \pmod 8$ when $d = -7$.

 Similarly, we have $s((1+4\omega_K)z) = s((5+4\omega_K)z)$ and that $(N((1+4\omega_K)z)-1)/2 \equiv (N(1+4\omega_K)-1)/2+(N(z)-1)/2 \equiv (N(z)-1)/2 \equiv (N((5+4\omega_K)z)-1)/2 \pmod 2$, so that we have
\begin{align}
\label{sum15-1twist}
\begin{split}
  \sum_{\substack{z=(1+4\omega_K)m, (5+4\omega_K)m, m \in G_4 \\ m=a+b\omega_K, a, b \in \mz}} & \leg{c_K}{z}\leg {s(z)}{c}(-1)^{(N(z)-1)/2}\widetilde{e}_K\leg{z}{c_K} \\
=& 2\sum_{\substack{z=(1+4\omega_K)m, m \in G_4 \\ m=a+b\omega_K, a, b \in \mz\\  2|b}}\leg{c_K}{z}\leg {s(z)}{c}(-1)^{(N(z)-1)/2}\widetilde{e}_K\leg{z}{c_K} = \begin{cases}
-4i, & d \neq -7, \\
4i, & d = -7.
\end{cases}
\end{split}
\end{align}
Here $N(1+4\omega_K)  \equiv 5 \pmod 8$. We conclude from \eqref{gexpressionsimplified}, \eqref{sum15result}, \eqref{sum15-1result}, \eqref{sum15twist} and \eqref{sum15-1twist} that the assertion of the lemma holds for $d \neq -2$. \newline

  Lastly, for the case $d=-2$, we have $c_K=4\omega_K$ in this case. Observe that the residue classes modulo $(\omega_K)$ consists of $\{0, 1\}$ and that $1+4\{0,1\}$ are two distinct elements modulo $4\omega_K$ but both congruent to $1$ modulo $4$. Thus, we have similar to \eqref{G4G2} that as sets,
\begin{align*}
%%\label{residuemodcKd2mod4}
\begin{split}
   G_{c_K} = G_4 \times \{ 1, 5 \} \pmod {c_K}.
\end{split}
\end{align*}
  Note that this time we have $\widetilde{e}_K ( m/c_K) =e(-a/8)$ for any $m=a+b\omega_K$ with $a$, $b \in \mz$. Moreover, we have by \cite[Propostion 4.2(iii)]{Lemmermeyer},
\begin{align*}
%%\label{quadvaluednegative2}
\begin{split}
  & \leg {\omega_K}{1+\omega_K}= \leg {\omega_k-(1+\omega_K)}{1+\omega_K}=\leg {-1}{1+\omega_K}=(-1)^{(N(1+\omega_K)-1)/2}=-1, \\
  & \leg {\omega_K}{5}=\leg {N(\omega_K)}{5}_{\mz}=\leg {2}{5}_{\mz}=-1.
\end{split}
\end{align*}

  One checks directly that the sums given in \eqref{sum15result} and \eqref{sum15twist} this time  equal $4\sqrt{2}$, $4\sqrt{2}i$, respectively.  We conclude from this and \eqref{gexpressionsimplified} that the assertion of the lemma holds for $d = -2$ as well. This completes the proof of the lemma.
\end{proof}

\subsection{Functional equations for Hecke $L$-functions}
	
Let $K=\mq(\sqrt{d})$ with $d \in \mathcal S$ and let $\chi$ be a primitive quadratic Hecke character of trivial infinite type modulo $q$. A well-known result of E. Hecke asserts that $L(s, \chi)$ has an analytic continuation to the whole complex plane and satisfies the functional equation (see \cite[Theorem 3.8]{iwakow})
\begin{align}
\label{fneqn}
  \Lambda(s, \chi) = W(\chi)\Lambda(1-s, \chi), \; \mbox{where} \;  W(\chi) = g_K(\chi)(N(q))^{-1/2}
\end{align}
and
\begin{align}
\label{Lambda}
  \Lambda(s, \chi) = (|D_K|N(q))^{s/2}(2\pi)^{-s}\Gamma(s)L(s, \chi).
\end{align}

  We apply the above to the case when $\chi=\chi^{(c_Kc)}$ for odd, square-free $c \in \mathcal O_K$ and deduce from Lemma \ref{lem: primquadGausssum} that $W(\chi^{(c_Kc)})=1$ in our situation.  It then follows from \eqref{fneqn} and \eqref{Lambda} that
\begin{align}
\label{fneqnL}
  L(s, \chi^{(c_Kc)})=(|D_K|N(c_Kc))^{1/2-s}(2\pi)^{2s-1}\frac {\Gamma(1-s)}{\Gamma (s)}L(1-s, \chi^{(c_Kc)}).
\end{align}

\subsection{A mean value estimate for quadratic Hecke $L$-functions}
 In the proof of Theorem \ref{Theorem for all characters}, we need the following lemma, which gives an upper bound for the second moment of quadratic Hecke $L$-functions.
\begin{lemma}
\label{lem:2.3}
 With the notation as above, let $S(X)$ denote the set of Hecke characters $\chi^{(m)}$ with $N(m)$ not exceeding $X$. Then we have, for any complex number $s$  and any $\varepsilon>0$ with $\Re(s) \geq 1/2$, $|s-1|>\varepsilon$,
\begin{align}
\label{L4est}
\sum_{\substack{\chi \in S(X)}} |L(s, \chi)| \ll & X^{1+\varepsilon}|s|^{1/2+\varepsilon}.
\end{align}
\end{lemma}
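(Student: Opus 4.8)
The plan is to deduce the first moment bound from a hybrid second moment estimate via the Cauchy--Schwarz inequality. Since the characters in $S(X)$ are parametrized, up to units, by the $m \in \mathcal{O}_K$ with $N(m) \le X$, we have $\# S(X) \ll X$, so it suffices to prove
\begin{align*}
\sum_{\chi \in S(X)} |L(s,\chi)|^2 \ll X^{1+\varepsilon}|s|^{1+\varepsilon}
\end{align*}
in the stated range of $s$. For $\Re(s) > 1 + \varepsilon$ this is immediate since $L(s,\chi) = O(1)$ there, so the content lies in the strip $1/2 \le \Re(s) \le 1 + \varepsilon$.

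Next I would reduce to primitive characters. Each $\chi^{(m)}$ is induced by a unique primitive Hecke character of trivial infinite type, which by the description recalled in Section~\ref{sec 1} is either principal or of the form $\chi^{(c_K c)}$ with $c$ odd and square-free; moreover $L(s,\chi^{(m)})$ equals $L(s,\chi^{(c_K c)})$ times a finite product of Euler factors at the primes dividing $m$ but not the conductor, and for $\Re(s) \ge 1/2$ this finite product is $\ll N(m)^\varepsilon$ by the standard estimate for products over prime divisors. The principal character contributes $\ll X^{1/2}|\zeta_K(s)|^2 \ll X^{1+\varepsilon}|s|^{1+\varepsilon}$ by convexity for the Dedekind zeta function together with $|s-1| > \varepsilon$, and may be set aside. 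The $m$ with $N(m) \le X$ inducing a fixed $\chi^{(c_K c)}$ differ from $c_K c$ by a perfect square up to a bounded factor and a unit, so there are $\ll (X/N(c))^{1/2}$ of them, and one is left with
\begin{align*}
\sum_{\chi \in S(X)} |L(s,\chi)|^2 \ll X^{1+\varepsilon}|s|^{1+\varepsilon} + X^{1/2+\varepsilon}\sum_{\substack{N(c) \le X \\ c\ \text{odd, square-free}}} N(c)^{-1/2}\,\big|L(s,\chi^{(c_K c)})\big|^2 .
\end{align*}
After a dyadic decomposition of the $c$-sum, the whole problem reduces to the estimate
\begin{align*}
\sum_{\substack{N(c) \sim C \\ c\ \text{odd, square-free}}} \big|L(s,\chi^{(c_K c)})\big|^2 \ll (C|s|)^{1+\varepsilon}, \qquad C \le X .
\end{align*}

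For this I would use an approximate functional equation built from \eqref{fneqnL} --- whose root number is $1$ by Lemma~\ref{lem: primquadGausssum} --- to write $L(s,\chi^{(c_K c)})$, up to a negligible tail, as a sum of two Dirichlet polynomials in $\chi^{(c_K c)}(n)$, each of length $\ll N(c)^{1/2}(1+|s|)\big(N(c)(1+|s|)\big)^\varepsilon$, whose coefficients $a_n$ satisfy $\sum_{N(n) \sim N} |a_n|^2 \ll (C|s|)^\varepsilon$ on dyadic ranges --- for the dual sum this uses $\Re(s) \ge 1/2$ to offset the gamma and conductor factors. Splitting into $O(\log(C|s|))$ dyadic blocks in $n$ and applying Cauchy--Schwarz over the blocks, the matter comes down to a Heath-Brown-type quadratic large sieve over $K$ of the shape
\begin{align*}
\sum_{\substack{N(c) \le C \\ c\ \text{square-free}}} \Big| \sum_{N(n) \le N} a_n \chi^{(c_K c)}(n) \Big|^2 \ll (C + N)(CN)^\varepsilon \sum_{N(n) \le N} |a_n|^2 .
\end{align*}
Inserting $N \ll C^{1/2}|s|^{1+\varepsilon}$ and the coefficient bound gives $\sum_{N(c) \sim C} \big|L(s,\chi^{(c_K c)})\big|^2 \ll (C + C^{1/2}|s|)(C|s|)^\varepsilon \ll (C|s|)^{1+\varepsilon}$; summing over dyadic $C$ and returning through Cauchy--Schwarz with $\# S(X) \ll X$ then yields $\sum_{\chi \in S(X)}|L(s,\chi)| \ll X^{1/2}\big(X^{1+\varepsilon}|s|^{1+\varepsilon}\big)^{1/2} = X^{1+\varepsilon}|s|^{1/2+\varepsilon}$, as claimed.

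The step I expect to be the main obstacle is the quadratic large sieve: one needs the inequality above over the specific fields $K = \mathbb{Q}(\sqrt{d})$ with $d \in \mathcal{S}$, in precisely this square-free form and with the displayed uniformity, and the character sum $\sum_n a_n \chi^{(c_K c)}(n) = \sum_n a_n \leg{c_K c}{n}$ must first be recast --- via the quadratic reciprocity law of Lemma~\ref{Quadrecgenprim} and the supplementary laws of Section~\ref{sec2.4}, which flip $\leg{c_K c}{n}$ into symbols of the form $\leg{n}{c}$ and separate off the fixed factor $c_K$ and the unit ambiguity --- into the form to which such a large sieve applies. By contrast, the Cauchy--Schwarz reduction, the passage to primitive characters, and the construction of the approximate functional equation are routine.
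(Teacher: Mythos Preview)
Your approach is essentially the same as the paper's: Cauchy--Schwarz reduces to a second moment, which is bounded via the quadratic large sieve over $K$ and the approximate functional equation. The paper is more compressed, citing \cite[Corollary~1.4]{BGL} and the large sieve of \cite[Theorem~1.1]{G&L} for the second moment over square-free $m$ directly, and then passing from $S(X)$ to the square-free family by writing $m=m_1m_2^2$ rather than reducing all the way to the primitive characters $\chi^{(c_Kc)}$.
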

\begin{proof}
Let $S'(X)$ stand for the set of $\chi^{(m)}$ with $m$ being square-free and $N(m) \leq X$.  It follows from the arguments in the proof of \cite[Corollary 1.4]{BGL}, upon using the quadratic large sieve for number fields \cite[Theorem 1.1]{G&L} that we have
\begin{align*}
%%\label{L1estimation}
\sum_{\substack{\chi \in S'(X)}}|L(s, \chi)|^2
\ll & X^{1+\varepsilon} |s|^{1+\varepsilon}.
\end{align*}
  It follows from the above and the Cauchy-Schwarz inequality that the estimation given in \eqref{L4est} is valid with $S(X)$ replaced by $S'(X)$. Now, we write $m=m_1m^2_2$ with $m_1$ being square-free to see that
\begin{align*}
%%\label{L1estimation}
\sum_{\substack{\chi \in S(X)}} |L(s, \chi)| \ll \sum_{N(m_2)\geq 1}\sum_{\substack{\chi \in S'(X/N(m^2_2))}}|L(s, \chi)| \ll & X^{1+\varepsilon} |s|^{1/2+\varepsilon}.
\end{align*}
  This completes the proof of the lemma.
\end{proof}

\subsection{Some results on multivariable complex functions}
	
   We include in this section some results from multivariable complex analysis. First we need the concept of a tube domain.
\begin{defin}
		An open set $T\subset\mc^n$ is a tube if there is an open set $U\subset\mr^n$ such that $T=\{z\in\mc^n:\ \Re(z)\in U\}.$
\end{defin}
	
   For a set $U\subset\mr^n$, we define $T(U)=U+i\mr^n\subset \mc^n$.  We have the following Bochner's Tube Theorem \cite{Boc}.
\begin{theorem}
\label{Bochner}
		Let $U\subset\mr^n$ be a connected open set and $f(z)$ be a function holomorphic on $T(U)$. Then $f(z)$ has a holomorphic continuation to the convex hull of $T(U)$.
\end{theorem}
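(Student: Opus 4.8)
The plan is to reduce the statement, via a standard trick, to the one-variable situation where one invokes Hartogs separate analyticity together with a suitable averaging argument; equivalently, one can give a direct proof using the Bochner--Martinelli type integral or the edge-of-the-wedge phenomenon, but the cleanest route I would take is through the theory of the Laplace/Fourier transform and convexity of plurisubharmonic functions. More precisely, I would first reduce to the case where $U$ is bounded and then proceed in three steps.

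First, I would show that it suffices to prove the following local statement: if $a,b\in U$ and the whole segment $[a,b]\subset U$, then $f$ extends holomorphically to a neighborhood of $T([a,b])$; indeed, the convex hull of a connected open set $U$ is the union of all segments joining points of $U$, and $T(U)$ is connected, so patching local extensions (using that holomorphic continuations across connected domains are unique, hence compatible on overlaps) yields the global continuation to $T(\mathrm{conv}(U))$. Second, for the segment statement I would use the classical averaging device: write $z = x + iy$ with $x\in U$, and for each fixed $y\in\mr^n$ consider the function $x\mapsto f(x+iy)$. By composing with an affine map one may assume the segment is $\{te_1 : t\in(-\delta,1+\delta)\}$ for a basis vector $e_1$; then the function of the single complex variable $z_1$ obtained by freezing the remaining $n-1$ complex coordinates (with real parts held in $U$) is holomorphic on a strip, and one must promote holomorphy on the two pieces of the strip to holomorphy on the whole strip. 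This is exactly where the one-dimensional input enters: a function holomorphic on two overlapping vertical strips that agrees on the overlap is holomorphic on the union — but the point of Bochner's theorem is subtler, since a priori one only has holomorphy on $T(U)$, a union of vertical lines over $U$, not over its convex hull.

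The genuinely delicate step — and the one I expect to be the main obstacle — is bridging the gap over the ``missing'' real parts, i.e. showing holomorphy over $\mathrm{conv}(U)\setminus U$. The standard way to overcome this is a Phragmén--Lindel\"of / subharmonicity argument: for a fixed imaginary part direction, the function $\log|f|$ restricted to appropriate complex lines is subharmonic where defined, and subharmonic functions on $T(U)$ that are bounded in the imaginary directions are automatically convex in the real variable after integrating out the imaginary part (this is essentially the statement that the ``indicator'' of $f$ is convex); convexity of the domain of definition then forces extension to $T(\mathrm{conv}(U))$. Concretely I would: (i) convolve $f$ with a smooth compactly supported approximate identity in the real variables to get functions $f_\varepsilon$ holomorphic on slightly shrunken tubes and bounded, (ii) apply the one-variable three-lines theorem along each complex direction to obtain uniform estimates on $T(\mathrm{conv}(U_\varepsilon))$, (iii) extend each $f_\varepsilon$ by these bounds and the Hartogs/Osgood lemma on separate analyticity, and (iv) let $\varepsilon\to 0$, using the uniform bounds to pass to the limit and recover the holomorphic extension of $f$ itself. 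Finally I would note that uniqueness of the extension is automatic since $T(U)$ has non-empty interior and $T(\mathrm{conv}(U))$ is connected, so the extension is canonical and the patching in the first step is consistent.

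I should emphasize that for the purposes of this paper only the \emph{existence} of the holomorphic continuation is needed (it will be applied to multiple Dirichlet series $A_K(s,w,z)$ to enlarge their domain of holomorphy), so one may freely cite \cite{Boc} rather than reproduce the argument; the sketch above merely indicates why the result is true and which ingredient — the convexity forced by subharmonicity of $\log|f|$ in the imaginary directions, not mere separate analyticity — is the crux.
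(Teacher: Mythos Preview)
The paper does not prove Theorem~\ref{Bochner} at all: it simply records the statement and cites Bochner's original paper \cite{Boc}. So there is no ``paper's own proof'' to compare against; the theorem is quoted as a black box and then applied to extend the domain of holomorphy of $A_K(s,w,z)$. Your own final paragraph already anticipates this, and for the purposes of the present work that citation is exactly what is done.

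That said, since you did sketch an argument, one point deserves correction. Your proposed ``local statement'' --- that if $a,b\in U$ \emph{and the whole segment $[a,b]\subset U$}, then $f$ extends to a neighborhood of $T([a,b])$ --- is vacuous as written: if $[a,b]\subset U$ then $T([a,b])\subset T(U)$, where $f$ is already holomorphic. The nontrivial content of Bochner's theorem is precisely to extend across points of $\mathrm{conv}(U)\setminus U$, so the segment reduction must be formulated without the hypothesis $[a,b]\subset U$. Relatedly, the claim that $\mathrm{conv}(U)$ equals the union of segments with both endpoints in $U$ is not generally true, so even the corrected local statement does not immediately yield the global one by a single patching; standard proofs instead iterate a midpoint (or ``slightly more convex'') extension step. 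Your third paragraph is closer to the actual mechanism, but step~(iii) --- ``extend each $f_\varepsilon$ by these bounds and the Hartogs/Osgood lemma'' --- hides the real work: uniform bounds alone do not produce a holomorphic extension, and separate analyticity in the Hartogs sense is not what is at stake here. The genuine input is an integral representation (or, equivalently, the convexity of the indicator function forced by the one-variable three-lines/Phragm\'en--Lindel\"of principle applied along complex lines), and that step would need to be made precise for the sketch to become a proof.
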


The convex hull of an open set $T\subset\mc^n$ is denoted by $\widehat T$.  Then we quote the result from \cite[Proposition C.5]{Cech1} on the modulus of holomorphic continuations of functions in multiple variables.
\begin{prop}
\label{Extending inequalities}
		Assume that $T\subset \mc^n$ is a tube domain, $g,h:T\rightarrow \mc$ are holomorphic functions, and let $\tilde g,\tilde h$ be their holomorphic continuations to $\widehat T$. If  $|g(z)|\leq |h(z)|$ for all $z\in T$, and $h(z)$ is nonzero in $T$, then also $|\tilde g(z)|\leq |\tilde h(z)|$ for all $z\in \widehat T$.
\end{prop}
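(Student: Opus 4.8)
The plan is to reduce the claim to the single statement that $g/h$, after holomorphic continuation, is still bounded by $1$, and to obtain that via Bochner's Tube Theorem together with a maximum-modulus argument on tube domains. Write $T=T(U)$ for a connected open $U\subset\mr^n$; then the convex hull of $T$ is $\widehat T=T(\widehat U)$, where $\widehat U=\mathrm{conv}(U)$ is open and convex. Since $h$ is non-vanishing on $T$, the function $\phi:=g/h$ is holomorphic on $T$, and $|g|\le|h|$ is precisely the statement that $|\phi(z)|\le1$ for all $z\in T$. Applying Theorem~\ref{Bochner} to $\phi$ produces a holomorphic function $\Phi$ on $\widehat T$ with $\Phi=\phi$ on $T$. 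The functions $\Phi\,\tilde h$ and $\tilde g$ are holomorphic on the connected set $\widehat T$ and agree on the nonempty open subset $T$, where $\tilde g=g=\phi h=\Phi\,\tilde h$; hence they agree on all of $\widehat T$ by the identity theorem. Thus $|\tilde g(z)|=|\Phi(z)|\,|\tilde h(z)|$ on $\widehat T$, and it suffices to prove $|\Phi(z)|\le1$ for every $z\in\widehat T$.

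To prove $|\Phi|\le1$ on $\widehat T$ I would propagate the bound $|\Phi|\le1$ from the sub-tube $T$ to its convex hull by a three-lines argument along real segments. By Carath\'eodory's theorem every point of $\widehat U$ is a convex combination of finitely many points of $U$, and a short induction on the number of points reduces matters to the following two-point assertion: if $a,b\in\widehat U$ satisfy $|\Phi(a+iy)|\le1$ and $|\Phi(b+iy)|\le1$ for all $y\in\mr^n$, then the same holds with $a$ replaced by any point of the segment $[a,b]$ (the base case being $a,b\in U$). For this, fix $y_0\in\mr^n$ and consider $\psi(\zeta)=\Phi\bigl(a+\zeta(b-a)+iy_0\bigr)$; convexity and openness of $\widehat U$ make $\psi$ holomorphic on a strip $-\delta<\Re\zeta<1+\delta$, and on the line $\Re\zeta=0$ (resp.\ $\Re\zeta=1$) it takes the form $\Phi(a+i\eta)$ (resp.\ $\Phi(b+i\eta)$) with $\eta\in\mr^n$, so $|\psi|\le1$ there by hypothesis. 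The classical three-lines theorem then forces $|\psi(\zeta)|\le1$ throughout $0\le\Re\zeta\le1$, which is the desired bound along $[a,b]$. Combining this with the first paragraph gives $|\tilde g(z)|\le|\tilde h(z)|$ for all $z\in\widehat T$. Conceptually, this argument is just the statement that $x\mapsto\sup_{y\in\mr^n}|\Phi(x+iy)|$ has a convex logarithm on $\widehat U$, so that being $\le1$ on $U$ forces it to be $\le1$ on $\widehat U=\mathrm{conv}(U)$.

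The delicate point — and the one I expect to be the main obstacle — is the application of the three-lines theorem to $\psi$: on the open strip $0<\Re\zeta<1$ the real part of the argument of $\Phi$ runs through points of $\widehat U$ that need not lie in $U$, so a priori we have no control there, and the maximum principle cannot be invoked on an unbounded strip without a growth condition. This is resolved by exploiting that $\phi=g/h$ is genuinely \emph{bounded} (by $1$) on the whole sub-tube $T$, not merely on a single slice: the relevant refinement of Bochner's Tube Theorem states that a bounded holomorphic function on $T(U)$ extends to $T(\widehat U)$ with the same supremum norm — this is Theorem~\ref{Bochner} combined with the Phragm\'en--Lindel\"of principle — which makes $\psi$ bounded on the closed strip and legitimises the three-lines estimate. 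With that in hand the remaining steps are routine, and in fact one may simply quote this bounded version of the tube theorem in place of the explicit segment argument.
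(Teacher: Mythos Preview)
The paper does not prove this proposition at all; it is simply quoted from \cite{Cech1}, Proposition~C.5, and used as a black box. There is therefore nothing in the present paper to compare your argument against.

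Your proposal is correct in substance. The reduction to showing that the Bochner extension $\Phi$ of $\phi=g/h$ satisfies $|\Phi|\le 1$ on $\widehat T$, together with the identity-theorem step $\tilde g=\Phi\,\tilde h$, is clean and valid (and, importantly, does not require $\tilde h$ to be zero-free on $\widehat T$). Your final resolution---invoking the norm-preserving form of the tube theorem, i.e.\ that a \emph{bounded} holomorphic function on $T(U)$ extends holomorphically to $T(\widehat U)$ with the same supremum---is exactly the right tool and is a standard strengthening of Theorem~\ref{Bochner} (it falls out of Bochner's original integral representation for the extension). Once you have that, the proof is one line.

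The three-lines/induction discussion in your second paragraph, however, does not stand on its own: as you yourself observe, the strip function $\psi$ is not known a priori to be bounded on $0\le\Re\zeta\le1$, so Hadamard's three-lines theorem cannot be applied at the base step without already knowing $|\Phi|$ is bounded on $\widehat T$---which is precisely what the bounded tube theorem gives you and what you are trying to prove. That paragraph is therefore circular as an independent argument and should be excised; the proof is simply the first paragraph together with the bounded version of Bochner's theorem.
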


\section{Proof of Theorem \ref{Theorem for all characters}}
\label{sec: proof of main result}

The Mellin inversion renders
\begin{align}
\label{Integral for all characters}
		\sumstar_{(c,2)=1}  \frac{L(\frac{1}{2}+\alpha,\chi^{(c_Kc)})}{L(\frac{1}{2}+\beta,\chi^{(c_Kc)})} w\left( \frac {N(c)}X \right) =\frac1{2\pi i}\int\limits_{(2)}A_K\lz s,\tfrac12+\alpha,\tfrac12+\beta\pz X^s\widehat w(s) \dif s,
\end{align}
  where
\begin{align*}
%%\label{Adef}
\begin{split}
A_K(s,w,z) := \sumstar_{\substack{c \odd }}\frac{L(w, \chi^{(c_kc)})}{L(z,\chi^{(c_Kc)})N(c)^s},
\end{split}
\end{align*}
 and $\widehat{w}$ is the Mellin transform of $w$ defined by
\begin{align*}
     \widehat{w}(s) =\int\limits^{\infty}_0w(t)t^s\frac {\dif t}{t}.
\end{align*}

  It follows from \eqref{Integral for all characters} that in order to prove Theorem \ref{Theorem for all characters}, it suffices to understand the analytical properties of $A_K(s,w,z)$. To that end, we write $\mu_K$ for the M\"obius function on $\mathcal O_K$.  For $\Re(s), \Re(w), \Re(z)$ large enough,
\begin{align}
\label{Aswzexp}
\begin{split}
A_K(s,w,z)
=\sumstar_{\substack{c \odd }}\sum_{\substack{m,k}}\frac{\mu_K(k)\chi^{(c_Kc)}(k)\chi^{(c_Kc)}(m)}{N(k)^zN(m)^wN(c)^s} = \sum_{\substack{m, k}}\frac{\mu_K(k)\chi^{(c_K)}(mk)}{N(m)^wN(k)^z}\sumstar_{\substack{c \odd}}\frac{\chi_{mk}(c)}{N(c)^s}.
\end{split}
\end{align}

 Moreover, we apply the functional equation \eqref{fneqnL} to $L(w, \chi^{(c_Kc)})$ and derive from \eqref{Aswzexp} that
\begin{align}
\label{Aswzexpzfcneq}
\begin{split}
 A_K(s,w,z)=& (|D_K|N(c_K))^{1/2-w}(2\pi)^{2w-1}\frac {\Gamma(1-w)}{\Gamma (w)}\sumstar_{\substack{c \odd}}\frac{L(1-w, \chi^{(c_Kc)})}{L(z, \chi^{(c_Kc)})N(c)^{s+w-1/2}} \\
=& (|D_K|N(c_K))^{1/2-w}(2\pi)^{2w-1}\frac {\Gamma(1-w)}{\Gamma (w)}A_K(s+w-\tfrac 12,1-w,z).
\end{split}
\end{align}
  The above can be regarded as a functional equation for $A_K(s,w,z)$.
	
\subsection{Regions of absolute convergence of $A_K(s,w,z)$}

  Note that when $c$ is primary, we have by Lemma \ref{Quadrecgenprim} that for odd $m, k$,
\begin{align*}
%%\label{chimk}
\begin{split}
 \chi_{mk}(c)=\chi^{s(mk)(mk)}(c)(-1)^{(N(mk)-1)/2\cdot (N(c)-1)/2}=\leg {16s(mk)(mk)(-1)^{(N(mk)-1)/2}}{c} :=\chi^{*}_{mk}(c),
\end{split}
\end{align*}
  where we recall that $s(z)$ denotes the unique element in $U_K$ such that $s(z)z$ is primary for any $(z,2)=1$ and the second equality above follows from
\eqref{2.051}. Here $\chi^{*}_{mk}$ is a real Hecke character of trivial infinite type whose conductor divides $16mk$. \newline

The presence of $\chi^{(c_K)}(mk)$ in \eqref{Aswzexp} restricts the sums there to over odd $m$, $k$. This allows us to write the inner sum in the last expression of \eqref{Aswzexp} as a Euler product, getting
\begin{align}
\label{sumoverd}
\begin{split}
\sumstar_{\substack{c \odd}}\frac{\chi_{mk}(c)}{N(c)^s}=\prod_{(\varpi,2)=1}(1+\chi^*_{mk}(\varpi)N(\varpi)^{-s})=\frac {L(s, \chi^*_{mk})}{\zeta^{(2mk)}_K(2s)}=\frac {L(s, \chi^*_{mk})}{\zeta_K(2s)}\prod_{\varpi|2mk}(1-N(\varpi)^{-2s})^{-1}.
\end{split}
\end{align}

  Thus \eqref{Aswzexp} and \eqref{sumoverd} yield
\begin{align}
\label{Aswd}
\begin{split}
A_K(s,w,z)=\sum_{\substack{m, k}}\frac{\mu_K(k)\chi^{(c_K)}(mk)}{N(m)^wN(k)^z}\frac {L(s, \chi^*_{mk})}{\zeta_K(2s)}\prod_{\varpi|2mk}(1-N(\varpi)^{-2s})^{-1}.
\end{split}
\end{align}

  Observe that when $\Re(s)>0$,
\begin{align*}
%%\label{key}
 \prod_{\varpi|2mk}(1-N(\varpi)^{-2s})^{-1} \leq \prod_{\varpi|2mk} \Big |(1-2^{-2\Re(s)})^{-1} \Big | \leq ((1-2^{-2\Re(s)})^{-1})^{\mathcal{W}_K(2mk)}
\ll N(mk)^{\varepsilon},
\end{align*}
  where $\mathcal{W}_K$ denotes the number of distinct prime factors of $n$ and the last estimation above follows from the well-known bound
(which can be derived in a manner similar to the proof of the classical case over $\mq$ given in\cite[Theorem 2.10]{MVa1})
\begin{align*}
%%\label{omegabound}
   \mathcal{W}_K(h) \ll \frac {\log N(h)}{\log \log N(h)}, \quad \mbox{for} \quad N(h) \geq 3.
\end{align*}

From \eqref{Aswd} and the above, we infer that for $\Re(s)>0$,
\begin{align}
\label{Aswboundsumm}
\begin{split}
 \zeta_K(2s)A_K(s,w,z) \ll &  \sum_{\substack{m, k}}\frac{|L(s, \chi^*_{mk})|}{N(m)^{\Re (w)-\varepsilon}N(k)^{\Re(z)-\varepsilon}}.
\end{split}
\end{align}

Now by Lemma \ref{lem:2.3},
\begin{align}
\label{sumovermk}
\begin{split}
  \sum_{N(mk) \leq X} |L(s, \chi^*_{mk})| \ll \sum_{N(c) \leq X} d_K(c)|L(s, \chi^*_{c})| \ll X^{\varepsilon}\sum_{N(c) \leq X} |L(s, \chi^*_{c})| \ll X^{1+\varepsilon}|s|^{1/2+\varepsilon},
\end{split}
\end{align}
   where $d_K(n)$ is the divisor function on $\mathcal O_K$ and we have, similar to the classical bound for the divisor function on $\mz$ given in \cite[Theorem 2.11]{MVa1}, that
\begin{align*}
%%\label{divisorfcn}
\begin{split}
 d_K(n) \ll N(n)^{\varepsilon}.
\end{split}
\end{align*}

   We thus conclude from \eqref{Aswboundsumm}, \eqref{sumovermk} and partial summation that the function $A_K(s,w,z)$ converges absolutely in the region
\begin{equation*}
%%\label{key}
		S_{1,1}=\{(s,w,z): \Re(s)>0, \ \Re(w)>1,\ \Re(z)>1, \ \Re(s+w)>3/2,\ \Re(s+z)>3/2\}.
\end{equation*}

  In what follows, we shall define similar regions $S_{i, j}$ and $S_j$ and adopt the convention that for any real number $\delta$,
\begin{align*}
%%\label{Aswboundwlarge}
\begin{split}
 S_{i, j,\delta} := \{ (s,w)+\delta (1,1) : (s,w) \in S_{i,j} \} \quad \mbox{and} \quad S_{j,\delta} :=  \{ (s,w)+\delta (1,1) : (s,w) \in S_j \}.
\end{split}
\end{align*}

   Using this notation, we see that in the region $S_{1,1, \varepsilon}$,
\begin{align}
\label{Aswboundwlarge}
\begin{split}
 |(s-1)\zeta_K(2s)A_K(s,w,z)| \ll & (1+|s|)^{3/2+\varepsilon}.
\end{split}
\end{align}

  We note also from \eqref{Aswzexp} that
\begin{align}
\label{Aswbound}
\begin{split}
A_K(s,w,z) \ll & \sumstar_{c \odd}\frac{|L(w, \chi^{(c_Kc)})|}{|L(z,\chi^{(c_Kc)})|N(c)^{\Re (s)}}.
\end{split}
\end{align}
 When $\Re(w) \geq 1/2$, we apply from \eqref{L4est} and partial summation to infer that the sum above is convergent in the region
\begin{equation*}
%%\label{key}
		S_{1,2}=\{(s,w): \ \Re(s)>1, \ \Re(w) \geq 1/2, \ \Re(z)>1/2 \}.
\end{equation*}
Recall from \cite[Theorem 5.19]{iwakow} that, on GRH,
\begin{align}
\label{Lindelof}
\begin{split}
      \frac1{|L(z,\chi^{(c_Kc)})|}\ll & (|z|N(c_Kc))^{\varepsilon}, \quad \Re(z) \geq 1/2+\varepsilon.
\end{split}
\end{align}
Thus in the region $S_{1,2, \varepsilon}$, 
\begin{align}
\label{Aswboundslarge}
\begin{split}
 |A_K(s,w, z)| \ll &  |z|^{\varepsilon}(1+|w|)^{1/2+\varepsilon}.
\end{split}
\end{align}

If $\Re(w) < 1/2$, we apply first the functional equation \eqref{fneqnL} and then deduce from \eqref{L4est} via partial summation that the sum in \eqref{Aswbound} is also convergent in the region
\begin{equation*}
%%\label{key}
		S_{1,3}=\{(s,w): \ \Re(s)>1, \ \Re(w)<1/2, \ \Re(s+w) > 3/2, \ \Re(z)>1/2 \}.
\end{equation*}
 now Stirling's formula (see \cite[(5.113)]{iwakow}) implies that
\begin{align}
\label{Stirlingratio}
  \frac {\Gamma(1-s)}{\Gamma (s)} \ll (1+|s|)^{1-2\Re (s)}.
\end{align}
This estimate, together with our discussions above, implies that in the region $S_{1,3, \varepsilon}$, 
\begin{align}
\label{Aswboundslarge1}
\begin{split}
 |A_K(s,w, z)| \ll &  |z|^{\varepsilon}(1+|w|)^{1/2+1-2\Re(w)+\varepsilon}.
\end{split}
\end{align}

  We denote $S_{1,4}$ for the union of $S_{1,2}$ and $S_{1,3}$ so that
\begin{equation*}
%%\label{key}
		S_{1,4}=\{(s,w): \ \Re(s)>1,  \ \Re(s+w) > 3/2, \ \Re(z)>1/2 \}.
\end{equation*}
  We then deduce from \eqref{Aswboundslarge} and \eqref{Aswboundslarge1} that in the region $S_{1,4, \varepsilon}$, under GRH,
\begin{align}
\label{Aswboundslarge2}
\begin{split}
 |A_K(s,w, z)| \ll &  |z|^{\varepsilon}(1+|w|)^{1/2+\max \{1-2\Re(w), 0\} +\varepsilon}.
\end{split}
\end{align}

The point $(1/2, 1, 1)$ is in $S_{1,1}$ while $(1,1/2, 1/2)$, $(1, 1/2, 1)$ are in $S_{1,2}$ and the three points determine a plane whose equation is given by $\Re(s+w)=3/2$. Similarly, the point $(1/2, 1, 1)$ is in $S_{1,1}$ and $(1,1/2, 1/2)$, $(1, 1, 1/2)$ are in $S_{1,2})$ and the three points determine a plane whose equation is $\Re(s+z)=3/2$. It follows that the convex hull of $S_{1,1}$ and $S_{1,4}$ equals
\begin{equation*}
%%\label{Region of convergence of A(s,w,z)}
S_1=\{(s,w,z): \ \Re(s)> 0, \ \Re(z)> 1/2, \ \Re(s+w)> 3/2,\ \Re(s+z)> 3/2 \}.
\end{equation*}

Moreover, Proposition \ref{Extending inequalities}, \eqref{Aswboundwlarge}, \eqref{Aswboundslarge2} and the bound $\zeta_K(2s) \ll 1$ for $\Re(s)>1$ yeild that in the region $S_{1, \varepsilon}$, on GRH,
\begin{align}
\label{Aswboundwlarge3}
\begin{split}
 |(s-1)\zeta_K(2s)A_K(s,w,z)| \ll & (1+|s|)^{3/2+\varepsilon}|z|^{\varepsilon}(1+|w|)^{1/2+\max \{1-2\Re(w), 0\}+\varepsilon}.
\end{split}
\end{align}

  We now apply the functional equation for $A_K(s,w,z)$ given in \eqref{Aswzexpzfcneq} and follow our discussions above to see that $A_K(s,w,z)$ is also  holomorphic in the region
\begin{equation*}
%%\label{S2}
		S_2=\{(s,w,z): \ \Re(s+w)> 1/2, \ \Re(z)> 1/2, \ \Re(s)>1,\ \Re(s+w+z)> 2\}.
\end{equation*}
  We also deduce from \eqref{Aswzexpzfcneq}, \eqref{Stirlingratio} and \eqref{Aswboundwlarge3} that in the region $S_{2, \varepsilon}$ for any $\varepsilon>0$, we have on GRH,
\begin{align}
\label{Aswboundwlarge4}
\begin{split}
 |(s+w-3/2) & \zeta_K(2s+2w-1)A_K(s,w,z)| \\
 \ll  & (1+|s+w-1/2|)^{3/2+\varepsilon}|z|^{\varepsilon}(1+|1-w|)^{1/2+\max \{1-2\Re(w), 0\}+\varepsilon}(1+|w|)^{1-2\Re (w)} \\
 \ll & (1+|s|)^{3/2+\varepsilon}|z|^{\varepsilon}(1+|w|)^{3-2\Re (w)+\max \{1-2\Re(w), 0\}+\varepsilon}.
\end{split}
\end{align}

  The union of $S_1$ and $S_2$ is connected and the convex hull of $S_1, S_2$ equals
\begin{align}
%%\label{Region of convergence of A(s,w,z)}
\begin{split}
	&	S_3 =\{(s,w, z): \ \Re(s)>0, \ \Re(z)> 1/2, \ \Re(s+w+z)> 2, \\
& \hspace{0.5in} \ \Re(2s+w+z)>3, \ \Re(2s+w)> 3/2, \ \Re (s+w)>1/2, \ \Re(s+z)> 3/2 \}.
\end{split}
\end{align}

  To see this, note that the two planes $\Re(s+w)= 3/2$ and $\Re(s+z)= 3/2$ intersect when $z=1/2$ at the line given by the intersection of $\Re(s+w)= 3/2$ and $\Re(s)= 1$. Note further that the plane $\Re(s+w+z)= 2$ can be written as $\Re(s+w)= 2-z$ and $2-z=3/2$ for $z=1/2$. Note also that $2-z=1/2$ when $z=3/2$. It follows that the convex hull of $S_1$ and $S_2$ contains the plane determined by the two lines:
$\Re(s+w)= 3/2$, $\Re(s+z)=3/2$ on $S_1$ and $\ \Re(s+w+z)=2,\ \Re(s)=1$ on $S_2$, intersecting with the two planes: $z=1/2$ and $z=3/2$. Note that the two lines both intersect the plane $z=1/2$ at the point $(1, 1/2, 1/2)$. The first line intersects the plane $z=3/2$ at $(0, 3/2, 3/2)$, the second at $(1, -1/2, 3/2)$. These three points then determine a plane with the equation $\Re(2s+w+z)=3$.  Moreover, when $z>3/2$, the convex hull continues with the plane that is determined by the lines: $\Re(s)=0, \ \Re(s+w)=3/2$ in $S_1$ and $\Re(s)=1, \ \Re(s+w)=1/2$ in $S_2$. As the point $(0, 3/2, 3/2)$ is on the first line and the point $(1, -1/2, 3/2)$ is on the second line, we see easily that the plane has the equation: $\Re(2s+w)= 3/2$.

Consequently, Theorem \ref{Bochner} gives that $(s-1)(s+w-1)\zeta_K(2s)\zeta_K(2w+2s-1)A_K(s,w,z)$ converges absolutely in the region $S_3$. Moreover, Proposition~\ref{Extending inequalities} supplies us with the bound, inherited from \eqref{Aswboundwlarge3} and \eqref{Aswboundwlarge4}, that in the region
\begin{equation*}
%%\label{Region of convergence of A(s,w,z)}
		S_{\varepsilon} := S_{3, \varepsilon} \cap \{(s,w,z):\ \Re(s) \geq 1/2+\varepsilon, \ \Re(s+w) \geq 1+\varepsilon, \ \Re(z) \geq 1/2+\varepsilon \},
\end{equation*}
for any $0<\varepsilon<1/100$,
\begin{align*}
%%\label{AswboundslargeS}
\begin{split}
 |(s-1)(s+w-1)\zeta_K(2s)\zeta_K(2w+2s-1)A(s,w)| \ll & |\zeta_K(2s)\zeta_K(2w+2s-1)|(1+|s|)^{5/2+\varepsilon}|z|^{\varepsilon}(1+|w|)^{5/2+\varepsilon}.
\end{split}
\end{align*}

Furthermore, when $\Re(s)  \geq 1/2+\varepsilon$ and $\Re(s+w) \geq 1+\varepsilon$, we have $1 \ll \zeta(2s)$ and $\zeta(2w+2s-1) \ll 1$.  So from \eqref{Aswboundslarge1}, in the region $S_{\varepsilon}$,
\begin{align}
\label{Aswboundslarge6}
\begin{split}
 |(s-1)(s+w-1)A_K(s,w,z)| \ll & (1+|s|)^{5/2+\varepsilon}|z|^{\varepsilon}(1+|w|)^{5/2+\varepsilon}.
\end{split}
\end{align}

\subsection{Residues of $A_K(s,w,z)$ at $s=1$ and $s+w=3/2$}
\label{sec:resA}

It follows from \eqref{sumoverd} that $A_K(s,w,z)$ has a pole at $s=1$ arising from the terms with $mk=u\square$ with $u \in U_K$, where $\square$ denotes a perfect square. In this case,
\begin{equation*}
		L\lz s, \chi^*_{mk}\pz=\zeta_K(s)\prod_{\varpi|2mk}\lz1-\frac1{N(\varpi)^s}\pz.
\end{equation*}
    Recall that we denote the residue of $\zeta_K(s)$ at $s = 1$ by $r_K$ and for any $n \in \mathcal O_K$. We then deduce from \eqref{Aswd} that
\begin{align}
\label{ResA0}
 \res_{s=1}A_K(s,w,z)=\frac {r_K |U_K|^2}{\zeta_K(2)} \sum_{\substack{mk=\square \\m,k\odd}}\frac{\mu_{K}(k)a(2mk)}{N(m)^wN(k)^z}=
\frac {r_K |U_K|^2a(2)}{\zeta_K(2)}  \sum_{\substack{mk=\square \\m,k\odd}}\frac{\mu_{K}(k)a(mk)}{N(m)^wN(k)^z},
\end{align}
where $a(n)$ is defined in \eqref{adef}. \newline

  We now express the last sum in \eqref{ResA0} as an Euler product via a direct computation similar to that done in \cite[(4.11)]{Cech1} to obtain that
\begin{align}
\label{ResA}
 \res_{s=1}A_K(s,w,z)=
\frac {r_K |U_K|^2a(2)}{\zeta_K(2)}  \frac{\zeta_K^{(2)}(2w)}{\zeta_K^{(2)}(w+z)}P(w,z),
\end{align}
  where $P(w,z)$ is given in \eqref{Pwz}. \newline

  Similarly, we deduce from \eqref{Aswzexpzfcneq} that
\begin{align}
\label{ResA1}
\begin{split}
 \res_{s=3/2-w}A_K(s,w,z)
=& (|D_K|N(c_K))^{1/2-w}(2\pi)^{2w-1}\frac {\Gamma(1-w)}{\Gamma (w)}\frac {r_K |U_K|^2a(2)}{\zeta_K(2)}  \frac{\zeta_K^{(2)}(2-2w)}{\zeta_K^{(2)}(1-w+z)}P(1-w,z).
\end{split}
\end{align}
	
\subsection{Completion of proof}

Now repeated integration by parts gives that for any integer $A \geq 0$,
\begin{align}
\label{whatbound}
 \widehat w(s)  \ll  \frac{1}{(1+|s|)^{A}}.
\end{align}

We evaluate the integral in \eqref{Integral for all characters} by shifting the line of integration  to $\Re(s)=E(\alpha,\beta)+\varepsilon$,  where $E(\alpha,\beta)$ is given in \eqref{Nab}.  Applying \eqref{Aswboundslarge6} and \eqref{whatbound} gives that the integral on the new line can be absorbed into the $O$-term in \eqref{Asymptotic for ratios of all characters}.  We encounter simple poles at $s=1$ and $s=1-\alpha$ in the process whose residues are given in \eqref{ResA} and \eqref{ResA1}, respectively. This yields the main terms in \eqref{Asymptotic for ratios of all characters} and completes the proof of Theorem \ref{Theorem for all characters}.

\section{Proof of Theorem \ref{Theorem for log derivatives}}

We first recast the expression in \eqref{Asymptotic for ratios of all characters} as
\begin{align}
\label{Asymptoticshortversion}
\begin{split}	
&\sumstar_{(c,2)=1}  \frac{L(\frac{1}{2}+\alpha,\chi^{(c_Kc)})}{L(\frac{1}{2}+\beta,\chi^{(c_Kc)})} w\left( \frac {N(c)}X \right) =   XM_1(\alpha,\beta)+X^{1-\alpha}M_2(\alpha,\beta)+R(X,\alpha,\beta),
\end{split}
\end{align}
   where $R(X,\alpha,\beta)$ is the error term in \eqref{Asymptotic for ratios of all characters} and we define
\begin{align*}
%%\label{M1M2}
\begin{split}
	M_1(\alpha,\beta)=& X\M w(1)\frac {r_K |U_K|^2a(2)}{\zeta_K(2)} \frac{\zeta_K^{(2)}(1+2\alpha)}{\zeta_K^{(2)}(1+\alpha+\beta)}
P(\tfrac 12+\alpha, \tfrac 12+\beta), \\
M_2(\alpha,\beta) =& X^{1-\alpha}\M w(1-\alpha)(|D_K|N(c_K))^{-\alpha}(2\pi)^{2\alpha}\frac {\Gamma(\tfrac{1}{2}-\alpha)}{\Gamma (\tfrac{1}{2}+\alpha)}\frac {r_K |U_K|^2a(2)}{\zeta_K(2)}  \frac{\zeta_K^{(2)}(1-2\alpha)}{\zeta_K^{(2)}(1-\alpha+\beta)}P(\tfrac 12-\alpha,\tfrac 12+\beta).
\end{split}
\end{align*}
	Note that the expression on the left-hand side of \eqref{Asymptoticshortversion} and both $M_1(\alpha,\beta)$ and $M_2(\alpha,\beta)$ are analytic functions of $\alpha,\beta$, so is $E(X,\alpha,\beta)$. \newline
	
	We now differentiate the above terms with respect to $\alpha$ for a fixed $\beta=r$ with $\Re(\beta)>\varepsilon$, and then set $\alpha=\beta=r$ to see that
\begin{align}
\label{M1der}
\begin{split}
			\frac{\dif}{\dif \alpha} XM_1(\alpha,\beta)\Big\vert_{\alpha=\beta=r}
			&=X\M w(1)\frac {r_K |U_K|^2a(2)}{\zeta_K(2)} \lz\frac{(\zeta^{(2)}_K(1+2r))'}{\zeta^{(2)}_K(1+2r)}+\sum_{(\varpi, 2)=1}\frac{\log N(\varpi)}{N(\varpi)(N(\varpi)^{1+2r}-1)}\pz.
\end{split}
\end{align}
	For the second term, we notice that due to the factor $1/\zeta_K(1-\alpha+\beta)$, only one term remains.  Moreover, $1/\zeta_K(s)=r^{-1}_K(s-1)+O((s-1)^2)$ and $P(1-\alpha, 1+\beta)=\zeta^{(2)}_K(2)/\zeta^{(2)}_K(2-2r)$. It follows that
\begin{align}
\label{M2der}
\begin{split}
			\frac{\dif}{\dif \alpha} &X^{1-\alpha}M_2(\alpha,\beta)\Big\vert_{\alpha=\beta=r}=-X^{1-r}\M w(1-r)(|D_K|N(c_K))^{-r}(2\pi)^{2r}\frac {\Gamma(\tfrac{1}{2}-r)}{\Gamma (\tfrac{1}{2}+r)}\frac {|U_K|^2a(2)}{\zeta^{(2)}_K(2-2r)} .
\end{split}
\end{align}
	
Next, as $R(X,\alpha,\beta)$ is analytic in $\alpha$, Cauchy's integral formula yields
\begin{align*}
%%\label{Rder}
\begin{split}
		\frac{\dif}{\dif \alpha}R(X,\alpha,\beta)=\frac{1}{2\pi i}\int\limits_{C_\alpha}\frac{R(X,z,\beta)}{(z-\alpha)^2} \dif z,
\end{split}
\end{align*}
   where $C_\alpha$ is a circle centered at $\alpha$ of radius $\rho$ with $\varepsilon/2<\rho<\varepsilon$. It follows that
\begin{align*}
%%\label{Rderest}
\begin{split}
		\lab\frac{\dif}{\dif \alpha}R(X,\alpha,\beta)\rab\ll \frac1{\rho}\cdot\max_{z\in C_\alpha} |R(X,z,\beta)|\ll (1+|\alpha|)^{5/2+\varepsilon}(1+|\beta|)^{\varepsilon}X^{E(\alpha,\beta)+\varepsilon}.
\end{split}
\end{align*}

    We now set $\alpha=\beta=r$ to deduce from \eqref{Nab} and the above that
\begin{align}
\label{Rder1}
\begin{split}
		\lab\frac{\dif}{\dif \alpha}R(X,\alpha,\beta)\rab\ll (1+|r|)^{5/2+\varepsilon}X^{1-2r+\varepsilon}.
\end{split}
\end{align}
	
   We conclude from \eqref{Asymptoticshortversion}--\eqref{Rder1} that \eqref{Sum of L'/L with removed 2-factors} holds, completing the proof of Theorem \ref{Theorem for log derivatives}.

\vspace*{.5cm}

\noindent{\bf Acknowledgments.}   P. G. is supported in part by NSFC grant 11871082 and L. Z. by the FRG Grant PS43707 at the University of New South Wales.

\bibliography{biblio}
\bibliographystyle{amsxport}

\end{document}